\newcommand*{\connsum}{\mathbin{\sharp}}
\newcommand*{\T}{\mathrm{T}}
\newcommand*{\Li}{\mathrm{L}}
\newcommand*{\tb}{\mathrm{tb}}
\newcommand*{\rot}{\mathrm{rot}}
\newcommand*{\M}{\mathrm{M}}
\newcommand*{\Sp}{\mathrm{S}}
\newcommand*{\sg}{\mathrm{sg}}
\newcommand*{\N}{\mathrm{N}}
\newcommand*{\B}{\mathrm{B}}
\newcommand*{\R}{\mathbb{R}}
\newcommand*{\xistd}{\xi_{std}}
\newcommand*{\V}{\mathrm{v}}
\newcommand*{\Z}{\mathbb{Z}}
\newcommand*{\K}{\mathrm{K}}
\newcommand*{\slk}{\mathrm{sl}}
\newcommand*{\TM}{\mathrm{TM}}
\newcommand*{\D}{\mathrm{D}}
\newcommand*{\Ho}{\mathrm{H}}
\newcommand*{\X}{\mathrm{X}}
\begin{document}
\title {Links in overtwisted contact manifolds}

\author[Rima Chatterjee]{Rima Chatterjee}
\address{Mathematisches Institut\\ Universit\"at zu K\" oln\\ Weyertal 86-90, 
  50931 K\"oln, Germany}
\email{\href{mailto:rchatt@math.uni-koeln.de}{rchatt@math.uni-koeln.de}}
\urladdr{\url{http://www.rimachatterjee.com/}}

\begin{abstract}
We prove that Legendrian and transverse links in overtwisted contact structures having overtwisted complements can be classified coarsely by their classical invariants. We further prove that any coarse equivalence class of loose links has support genus zero and constructed examples to show that the converse does not hold. 
\end{abstract} 
\maketitle
\section{Introduction}
\label{intro}
Knot theory associated to contact 3-manifolds has been a very interesting field of study. We say a knot in a contact 3-manifold is \emph{Legendrian} if it is tangent everywhere to the contact planes and \emph{transverse} if it is everywhere transverse. The classification of Legendrian and transverse knots has always been an interesting and difficult problem in contact geometry. Two Legendrian knots are said to be \emph{Legendrian isotopic} if they are isotopic through Legendrian knots. A knot or link type is said to be \emph{Legendrian simple} if it can be classified by its classical invariants up to Legendrian isotopy.  There are only a few knot types that are known to be Legendrian simple in $(\Sp^3,\xistd)$. For example topologically trivial knots in \cite{ef}, the torus knots and figure eight knots in \cite{eth} are all Legendrian simple. While there is no reason to believe all knots should be Legendrian simple, it has been historically difficult to prove otherwise. Chekanov \cite{chekanov} and independently, Eliashberg \cite{eli} developed invariants of Legendrian knots that show that $m(5_2)$ has Legendrian representatives that are not distinguised by their classical invariants.

 Since Eliashberg's classification of overtwisted contact structures \cite{eliashbergovertwist}, the study of overtwisted contact structures and the knots and links in them, has been minimal. However, in recent years overtwisted contact structures have played central roles in many interesting applications such as building achiral Lefchetz fibration \cite{ef}, near symplectic structures on 4-manifolds \cite{gay} and many more. Thus the overtwisted manifolds and the knot theory associated to them has generated significant interest. There are two types of knots/links in overtwisted contact structures, namely loose and non-loose (Also known as non-exceptional and exceptional respectively).
A link in an overtwisted contact manifold is loose if its complement is overtwisted and non-loose otherwise. The first explicit example of a non-loose knot is given by Dymara in \cite{dy}. In general, non-loose knots appear to be rare. It is still not known if every knot type has a non-loose representative. We have another notion of classification of knots and links in contact manifolds known as \emph{coarse equivalence}. We say knots/links are coarsely classified if they are classified up to orientation preserving contactomorphism, smoothly isotopic to the identity. Observe that, though classification by Legendrian isotopy and coarse equivalence are equivalent in $(\Sp^3,\xistd)$, they are not the same in general. Eliashberg and Fraser gave a coarse classification of Legendrian unknots in overtwisted contact structure in $\Sp^3$ \cite{ef}. Later, Geiges and Onaran gave a partial coarse classification of the non-loose left handed trefoil knots in \cite{geiona2} and  non-loose Legendrian Hopf links in \cite{geiona}. Recently, Matkovi\v{c} in \cite{matko} extended their result. Note that, this is still not a complete classification. Also, all of these classification results have been proved in overtwisted $\Sp^3$.

This paper studies links in all overtwisted contact manifolds. In \cite{et}, Etnyre proved that loose, null-homologous Legendrian and transverse knots can be coarsely classified by their classical invariants. In \cite{geiona2}, the authors proved that for the loose Hopf link, this classification result remains true. It turns out that Etnyre's work very naturally extends for every null-homologous loose links (By null-homologous here we mean every link component bounds a Seifert surface) which is the first theorem of this paper:

\begin{theorem}
\label{thm:Legmain}
Suppose $\Li_1$ and $\Li_2$ are two loose null-homologous Legendrian links with same classical invariants. Then, $\Li_1$ and $\Li_2$ are coarsely equivalent.
\end{theorem}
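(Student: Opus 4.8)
The plan is to follow Etnyre's proof of the knot case \cite{et} essentially verbatim; the extra link components cost almost nothing. Write $\Li_i = K^i_1 \cup \cdots \cup K^i_n$. Since $\Li_1$ and $\Li_2$ have the same smooth (oriented, labeled) link type and the same $\tb$ on each component, I would first promote a smooth ambient isotopy carrying $\Li_1$ to $\Li_2$ into a diffeomorphism $\phi$ of $M$, smoothly isotopic to $\operatorname{id}_M$, with $\phi(\Li_1)=\Li_2$ and with $\phi$ a contactomorphism from a standard contact neighborhood $N(\Li_1)$ — a disjoint union of solid tori, the contact framing on each recording the corresponding $\tb$ — onto a standard neighborhood $N(\Li_2)$; this is possible because the standard neighborhood of a Legendrian link is determined up to contactomorphism by the link together with its framing.

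Next I would pass to the complements $M_i := M \setminus \operatorname{int} N(\Li_i)$, compact manifolds with convex torus boundary, and consider on $M_2$ the two contact structures $\xi|_{M_2}$ and $\phi_*(\xi|_{M_1})$, which agree near $\partial M_2$. Both are overtwisted: since $\Li_1$ and $\Li_2$ are loose, $M\setminus\Li_i$ is overtwisted, and an overtwisted disk there is compact, so it survives removal of a sufficiently thin neighborhood of the link. If I can show these two contact structures are isotopic rel $\partial M_2$, then gluing the resulting contactomorphism of $M_2$ to $\phi|_{N(\Li_1)}$ along the boundary tori produces a contactomorphism of $(M,\xi)$, smoothly isotopic to the identity, carrying $\Li_1$ to $\Li_2$ — exactly coarse equivalence.

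To obtain that isotopy I would first compare the two structures as plane fields. As $\phi \simeq \operatorname{id}_M$, the plane fields $\phi_*\xi$ and $\xi$ are homotopic on $M$, and the point is to do this rel $\partial M_2$. The primary obstruction is a relative Euler class in $H^2(M_2,\partial M_2;\Z)$; since each $K^i_j$ is null-homologous, this group is detected component by component by the Seifert surfaces (together with $H_2(M)$), and the familiar identity presenting the rotation number as the relative Euler number, normalized by the $\tb$-framing on the boundary, shows the two classes coincide precisely because all $\tb$'s and $\rot$'s agree. The remaining obstruction is the three-dimensional (Hopf) obstruction in $H^3(M_2,\partial M_2;\Z)\cong\Z$, read off from the ambient homotopy on $M$; I expect this to be the one genuinely delicate point, and I would follow Etnyre's analysis to show it too is pinned down by the classical data. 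With the plane fields homotopic rel $\partial M_2$, I would invoke the relative form of Eliashberg's classification of overtwisted contact structures (for compact manifolds with a fixed contact germ along the boundary): both structures being overtwisted, they are isotopic rel boundary, and a Gray stability argument yields the contactomorphism needed above.

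The difference between ``link'' and ``knot'' is pure bookkeeping: the solid torus becomes a disjoint union, $H_1$ of the complement grows, but everything relevant is handled one component at a time using the Seifert surfaces guaranteed by the null-homologous hypothesis. Consequently the only real obstacle is inherited from the knot case — checking that the full homotopy class of $\xi$ on the link complement, rel the $\tb$-normalized boundary and including the $H^3$-term, is determined by the smooth link type together with the $\tb$'s and $\rot$'s. Once that is in hand, looseness is precisely what licenses the relative Eliashberg theorem, and the remainder is assembly (including the routine verification that the glued contactomorphism is smoothly isotopic to the identity).
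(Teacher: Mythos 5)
Your proposal follows essentially the same route as the paper's proof: reduce via the ambient isotopy and identified standard neighborhoods to comparing the two contact structures on the link complement rel boundary, show they are homotopic there as plane fields using the null-homologous hypothesis (the rotation numbers pin down the relative Euler class against the Seifert surfaces, the $\Ho_1(\M)$ part comes from $\phi\simeq\operatorname{id}$, and the remaining framing/$\pi_3$ ambiguity is controlled), and then invoke Eliashberg's classification of overtwisted contact structures rel boundary before gluing back the neighborhood. The only cosmetic difference is that the paper packages the rel-boundary homotopy comparison via the relative Pontryagin--Thom construction (framed links up to framed cobordism) rather than cell-by-cell obstruction theory, and it does carry out the step you flag as delicate, showing the two framings differ by $2d[\Li_\xi]$, which is exactly the framed-cobordism indeterminacy in the complement.
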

\begin{remark}
Here by a null-homologous link, we assume that every link component is null-homologous.
\end{remark}
The above theorem tells us that there is only a unique loose link with any fixed classical invariants in any overtwisted contact structure up to contactomorphism.
\begin{remark}
In an overtwisted contact manifold $(\M,\xi)$, classification up to contactomorphism and classification upto Legendrian isotopy are not equaivalent. Our result doesn't say anything about the Legendrian simpleness  of a loose link. Dymara in \cite{dy} proved that two Legendrian knots having same classical invariants in any contact 3-manifold $(\M,\xi)$ are Legendrian isotopic if if there exists an overtwisted disk disjoint from both of them. Later this result was strengthened by Ding--Geiges in \cite{Ding-Geiges} and further by Cahn-Chernov in \cite{Cahn-Chernov} . In spite of being a stronger notion of classification, unfortunately this does not apply to all loose knots.
\end{remark}
 %\subsubsection*{\bf Question 1} Can loose links be classified upto Legendrian isotopy?
As a corollary we proved the following result for loose transverse links.
\begin{corollary}
\label{cor:transverse}
Suppose $\T$ and $\T'$ are two transverse loose null-homologous links with same classical invariants. Then $\T$ and $\T'$ are coarsely equivalent.
\end{corollary}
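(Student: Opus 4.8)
The plan is to deduce \autoref{cor:transverse} from \autoref{thm:Legmain} using the standard dictionary between transverse links and their Legendrian approximations. Recall that every transverse link $\T$ has a Legendrian approximation $\Li$, well defined up to negative stabilizations of individual components and Legendrian isotopy, and that conversely a Legendrian link $\Li$ has a positive transverse push-off $\T_+(\Li)$, well defined up to transverse isotopy, with the two constructions inverse to one another up to the relevant isotopies; in particular $\T_+$ of a Legendrian approximation of $\T$ is transversely isotopic to $\T$. For a null-homologous component the self-linking number of the push-off is $\tb-\rot$, so the self-linking numbers of $\T$ are read off componentwise from the classical invariants of any Legendrian approximation, and a negative stabilization leaves $\tb-\rot$ unchanged while lowering $\tb$ by one. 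I will also use that Legendrian approximation, transverse push-off, and negative stabilization all preserve both the homology class of each component and the (non-)looseness of the link; the latter is part of the circle of ideas in \cite{et} (the relevant complements contain the same overtwisted disks up to isotopy, and a stabilization may be performed in a Darboux ball disjoint from a fixed overtwisted disk in the complement).

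First I would choose Legendrian approximations $\Li$ of $\T$ and $\Li'$ of $\T'$. By the remarks above these are loose, null-homologous, smoothly isotopic to $\T$ and $\T'$ respectively, and satisfy $\tb(\Li)-\rot(\Li)=\slk(\T)=\slk(\T')=\tb(\Li')-\rot(\Li')$ on each pair of corresponding components. Next, for each component on which the two Thurston--Bennequin invariants differ, I would negatively stabilize that component of whichever of $\Li,\Li'$ has the larger value, the appropriate number of times, so that afterwards $\tb(\Li)=\tb(\Li')$ componentwise; since $\tb-\rot$ is unaffected by negative stabilization and already agreed, the rotation numbers then agree as well. At this stage $\Li$ and $\Li'$ are loose null-homologous Legendrian links with the same topological type and the same $\tb$ and $\rot$, i.e.\ with the same classical invariants.

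Applying \autoref{thm:Legmain}, there is an orientation-preserving contactomorphism of the ambient overtwisted contact manifold, smoothly isotopic to the identity, taking $\Li$ to $\Li'$. Since the positive transverse push-off is canonical, this contactomorphism sends $\T_+(\Li)$ to a transverse link transversely isotopic to $\T_+(\Li')$; composing with the ambient contact isotopies realizing this transverse isotopy together with the transverse isotopies from $\T$ to $\T_+(\Li)$ and from $\T_+(\Li')$ to $\T'$ (all of which are smoothly isotopic to the identity) yields the desired contactomorphism carrying $\T$ to $\T'$. I do not expect a deep obstacle here beyond \autoref{thm:Legmain} itself; the points that require care are the verification that looseness and the null-homology condition survive every operation used, and the check that the classical invariants of $\Li$ and $\Li'$ can genuinely be made to agree by negative stabilizations alone — which is precisely why the matching is routed through $\tb-\rot$, the quantity that negative stabilization fixes.
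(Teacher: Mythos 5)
Your proposal is correct and follows essentially the same route as the paper: Legendrian approximate both links (keeping them loose), use negative stabilizations to match the Thurston--Bennequin invariants, observe that equality of self-linking numbers then forces the rotation numbers to agree since negative stabilization preserves $\tb-\rot$, apply \autoref{thm:Legmain}, and recover $\T$ and $\T'$ via the well-defined positive transverse push-off. The only cosmetic difference is that you make explicit the bookkeeping through the quantity $\tb-\rot$ and the preservation of looseness, which the paper leaves implicit.
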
 
In other words, there is a unique loose null-homologous transverse link with every component having a fixed  self-linking number up to contactomorphism.
\begin{remark}
In \cite{et}, the theorem was proved for null-homologous knots and it was hinted that these might be extended to non-null homologous knots using Tchernov's definition of relative rotation number and relative Thurston--Benniquin number \cite{tchernov} with some extra conditions on the underlying manifold. It seems plausible that the same idea can be extended for links as well.
\end{remark}
After classifying the Legendrian and transverse loose links, we associate a Legendrian link with a compatible open book decomposition of the manifold. First, we extended the definition of the support genus of a Legendrian knot defined in \cite{ona} to the support genus of a Legendrian link (this extension comes naturally) and proved the following theorem about coarse equivalence class of  loose Legendrian links.

\begin{theorem}
\label{thm:supportgenus}
Suppose $[\Li]$ denotes the coarse equivalence class of loose, null-homologous Legendrian links with in any contact $3$-manifold.Then $\sg([\Li])=0$.

\end{theorem}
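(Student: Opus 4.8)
The plan is to use Theorem~\ref{thm:Legmain} to replace $\Li$ by a convenient coarsely equivalent link and then realize that link on a planar page. Every Legendrian link in a contact $3$-manifold lies on a page of \emph{some} open book supporting the contact structure, so $\sg$ is always finite; the point for a loose null-homologous Legendrian link is to drive the genus down to zero. By Theorem~\ref{thm:Legmain} the coarse equivalence class of such a link is determined by its smooth link type together with the pair $(\tb,\rot)$ of each component, so it suffices to build, from this data, a single Legendrian link in the class that sits on a genus-zero page of a supporting open book; since the support genus is non-negative, this gives $\sg([\Li])=0$.

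To produce that representative I would start from the standard form of a loose Legendrian link underlying the proof of Theorem~\ref{thm:Legmain}, extending Etnyre's analysis of loose knots in \cite{et}: up to coarse equivalence, $\Li$ may be taken to be a Legendrian connected sum of a ``principal'' Legendrian link $\Li_0$ --- supported on pages of an open book for a contact structure agreeing with $\xi$ over the $2$-skeleton of $\M$ --- with a standard loose model $\lambda$ in an overtwisted $(\Sp^3,\xi_{ot})$ carrying the remaining $d_3$-defect. Now $(\Sp^3,\xi_{ot})$ is supported by a planar open book, obtained from a negative Hopf band by plumbing finitely many further Hopf bands, on a page of which $\lambda$ can be placed; and, since every component of $\Li$ is null-homologous, one can absorb the Seifert surface of each component of $\Li_0$ into an enlarged page, one component at a time, by attaching bands and plumbing negative Hopf bands in a way that keeps the page planar. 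Once each component sits on such a page, its Thurston--Bennequin invariant is the framing induced by the page; one then arranges the rotation numbers by choosing how the Seifert surfaces are embedded in the page and inserting suitable Dehn twists in the monodromy. Matching the plane field over the $2$-skeleton together with the correct $d_3$ and invoking Eliashberg's classification of overtwisted contact structures, the planar open book built this way supports $\xi$, and $\Li$ lies on its genus-zero page, so $\sg([\Li])=0$.

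I expect the main obstacle to be the simultaneous control of all the rotation numbers: placing a knot on a page fixes its Thurston--Bennequin invariant automatically, but its rotation number depends both on its homology class inside the page and on how the page is framed along the Seifert surface it bounds, and keeping all of these correct while the page is repeatedly enlarged to fit several Seifert surfaces requires a careful bookkeeping argument --- this is precisely where the hypothesis that \emph{each} component, rather than merely the link as a whole, is null-homologous is used. A secondary point is to verify that the planar open book one constructs supports $\xi$ itself and not merely a contactomorphic contact structure; this can be arranged by invoking Theorem~\ref{thm:Legmain} again to identify the link obtained on the planar page, which has the correct smooth type and classical invariants, with a representative of $[\Li]$.
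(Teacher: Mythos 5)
Your overall skeleton matches the paper's: reduce via the coarse classification (Theorem~\ref{thm:Legmain}) to exhibiting a single representative with the prescribed smooth type and classical invariants on a genus-zero page of an open book supporting $\xi$, and use Eliashberg's classification to identify the contact structure. But the two steps where all the technical content lives are either wrong or missing. First, your plan to ``absorb the Seifert surface of each component into an enlarged page\dots in a way that keeps the page planar'' cannot work as stated: a page containing a genus-$g$ Seifert surface has genus at least $g$, so for any component of positive Seifert genus this destroys planarity. The Seifert surfaces are only needed to \emph{define} $\tb$ and $\rot$; they need not, and in general cannot, lie in a planar page. The correct input here is Onaran's theorem that any link in a $3$-manifold sits on a planar open book (of the manifold, with no contact condition yet), which the paper quotes and then upgrades: negatively stabilize until the supported structure is overtwisted, then adjust $d_2$ and $d_3$ by Lutz twists and Murasugi sums (all genus-preserving) until it is isotopic to $\xi$.

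Second, your mechanism for fixing the classical invariants --- ``choosing how the Seifert surfaces are embedded in the page and inserting suitable Dehn twists in the monodromy'' --- is not an argument: extra Dehn twists change the supported contact structure, and re-embedding the link changes its smooth type. The paper needs two dedicated lemmas precisely here. Stabilizing a component on the page (positively or negatively) is done by positively stabilizing the open book near the outermost binding component and pushing that one component over the new handle, away from the others; destabilizing a component requires negatively stabilizing the open book (which changes the contact structure, later corrected by Murasugi sums), producing an overtwisted disk disjoint from the link, and invoking Dymara's result that Legendrian knots with equal classical invariants and a disjoint overtwisted disk are Legendrian isotopic. Combining a positive stabilization with a negative destabilization fixes $\tb$ and shifts $\rot$ by $2$, and the parity constraint that $\tb+\rot$ is odd shows every admissible value is reached. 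Without some replacement for these lemmas, your proposal does not control the invariants componentwise while keeping the page planar and the supported structure equal to $\xi$. The ``connected sum with a standard loose model carrying the $d_3$-defect'' normal form you invoke is also not established anywhere and is not needed once the above machinery is in place.
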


The above result gives a generalization (weak) of Onaran's result.

Like non-loose knots, non-loose links appear to be rare. The above theorem suggests, if we can find a Legendrian link $\Li$ with $\sg(\Li)>0$ that will immediately tell us that $\Li$ is non-loose.  
We also show that the converse of the theorem is not true by constructing planar open books for non-loose links.
\begin{theorem}
\label{thm:examples}
There are examples of non-loose links with support genus zero.
\end{theorem}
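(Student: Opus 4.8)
The plan is to exhibit an explicit family of links, each living in a contact manifold supported by a \emph{planar} open book, and then verify directly that the link in question is non-loose, i.e.\ that its complement is tight. The natural source of such examples is a Legendrian (or transverse) link that sits on a page of a planar open book in a way that ``uses up'' the overtwistedness: the complement of the link, with the induced open book structure, should again be supported by a planar open book whose monodromy is a product of \emph{positive} Dehn twists, hence Stein fillable and in particular tight. Concretely, I would start from the simplest overtwisted planar open book --- an annular or pair-of-pants page with a single negative Dehn twist in the monodromy --- and place the link components as boundary-parallel curves (or as curves encircling the ``negative'' twist region) so that removing a neighborhood of the link cancels the negative twist or pushes it to a trivial factor.

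The key steps, in order, are: (1) write down the planar open book $(\Sigma,\phi)$ with $\Sigma$ a disk-with-holes and $\phi$ a prescribed word in Dehn twists, and identify the overtwisted contact manifold it supports; (2) describe the link $\Li$ as a Legendrian realization of an explicit multicurve on the page, recording its topological type and its classical invariants $(\tb,\rot)$ via the page framing; (3) compute a compatible open book for the complement $\M \setminus \nu(\Li)$ --- this is the crux, and it is where one uses that adding the link components as binding/braid axis components of the open book modifies the monodromy in a controlled way, yielding a new open book $(\Sigma',\phi')$; (4) show $(\Sigma',\phi')$ has monodromy a product of positive Dehn twists (equivalently, invoke the Giroux/Loi--Piergallini correspondence plus positivity to conclude Stein fillability), hence the complement is tight, so $\Li$ is non-loose; (5) observe that $\sg(\Li)=0$ by construction, since $\Sigma$ was chosen planar. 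By Theorem 1.4, a non-loose link \emph{can} have support genus zero, so this shows the converse of Theorem 1.4 fails.

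I expect step (3) to be the main obstacle: relating an open book for $\M$ to an open book for the link complement is not purely formal, because one must choose the Legendrian representative to lie on a page and then perform the correct sequence of positive/negative stabilizations and Murasugi-type operations so that the resulting page stays planar and the monodromy stays computable. A clean way to organize this is to instead build the example the other way around: start with a planar open book $(\Sigma',\phi')$ with strictly positive monodromy supporting a tight (Stein fillable) manifold with nonempty binding, view one or more binding components (or a transverse push-off of a page curve) as the link $\Li$, and then \emph{glue in} solid tori to produce the ambient overtwisted $(\M,\xi)$ together with its own planar open book; non-looseness of $\Li$ is then immediate from the tightness of the filling $(\Sigma',\phi')$, and planarity is manifest on both sides. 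The remaining work is to check that the reglued manifold $(\M,\xi)$ is genuinely overtwisted --- which one can arrange by making the gluing introduce a Lutz-type twist, or simply by computing that $(\M,\xi)$ does not embed in any tight manifold --- and to tabulate the classical invariants of $\Li$ so the statement is backed by a concrete example rather than an abstract existence argument.
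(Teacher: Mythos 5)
Your proposal is a strategy outline rather than a proof: for an existence statement like this one, the content of the proof \emph{is} the concrete example, and you have not produced one. You correctly identify the overall shape of the argument the paper uses (exhibit an explicit link on a planar open book supporting an overtwisted contact structure, then certify non-looseness), but both of the steps you yourself flag as the crux are left open. The paper sidesteps your step (3) entirely: it never computes an open book for the link complement. Instead it takes the non-loose Legendrian Hopf links in $(\Sp^3,\xi_{-1})$ and $(\Sp^3,\xi_{-2})$ already identified by Geiges and Onaran via contact $(\pm 1)$-surgery diagrams, and certifies non-looseness by a cancellation argument: performing an additional contact $(-1)$-surgery on one component (which does not change the complement of that component) cancels one of the $(+1)$-surgeries and yields a tight manifold (the unique tight $\Sp^1\times\Sp^2$ in the first example), so the complement of that component, and a fortiori the complement of the whole link, is tight. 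The planar open book is then built by hand, starting from the annular open book of $(\Sp^3,\xistd)$, stabilizing so that the link and the surgery curves lie on pages, and absorbing the surgeries into Dehn twists in the monodromy.

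Two specific gaps in the route you propose. First, certifying tightness of the complement by finding ``an open book for $\M\setminus N(\Li)$ with positive monodromy'' is not well posed as stated: the complement is a manifold with boundary, so you would need relative or partial open books and a relative version of the Giroux/Loi--Piergallini positivity criterion, which is far more machinery than the theorem requires and is not developed in the paper. Second, your fallback construction (start from a positive planar open book and reglue solid tori with a Lutz-type twist) still leaves you to verify that the result is genuinely overtwisted, to identify which overtwisted structure you land in, and to produce a planar open book for the new ambient manifold with the link on a page; none of this is easier than starting from a known non-loose link with a surgery presentation. To close the gap, commit to a specific link --- the non-loose Hopf links of Geiges--Onaran are the natural choice --- verify non-looseness by the surgery cancellation, and carry out the open book stabilizations explicitly.
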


Also, as a corollary we have a similar result for coarse equivalence class of loose transverse links.
\begin{corollary}
Suppose $[\T]$ be a coarse equivalence class of loose, null-homologous loose transverse links. Then $\sg[\T]=0$
\end{corollary}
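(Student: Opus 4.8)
The plan is to deduce the statement from \fullref{thm:supportgenus} through the standard correspondence between transverse links and their Legendrian approximations. Recall that a transverse link $\T$ has a Legendrian approximation $\Li$, obtained as a $C^0$-small Legendrian pushoff and well defined up to negative stabilization, and that the positive transverse pushoff of $\Li$ recovers $\T$ up to transverse isotopy. One therefore sets $\sg(\T) = \sg(\Li)$, and $\sg([\T]) = \sg([\Li])$ on coarse equivalence classes; this is well posed provided $\sg$ is unchanged by a negative (indeed either) stabilization, which holds because a stabilization can be taken to be supported in a Darboux ball meeting a single page of a supporting open book, so it alters neither the open book nor the property of lying on a page, hence not the support genus.

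First I would verify that looseness and null-homology pass between $\T$ and $\Li$. Since $\Li$ lies in every standard neighborhood of $\T$ and $\T$ lies in every standard neighborhood of $\Li$, one obtains nested standard neighborhoods $\N(\Li) \subset \N(\T) \subset \N'(\Li)$; because $\M\setminus\N(\Li)$ and $\M\setminus\N'(\Li)$ are contactomorphic (uniqueness of standard Legendrian neighborhoods), the sandwiched complement $\M\setminus\N(\T)$ is overtwisted if and only if $\M\setminus\N(\Li)$ is, i.e., $\T$ is loose exactly when $\Li$ is. As $\Li$ is smoothly isotopic to $\T$, every component of $\Li$ is null-homologous when every component of $\T$ is. Finally, Legendrian approximation and transverse pushoff are natural with respect to orientation-preserving contactomorphisms and compatible with smooth isotopy, so a coarse equivalence between transverse links induces a coarse equivalence between their Legendrian approximations (after equalizing the number of negative stabilizations) and conversely; hence $[\T]\mapsto[\Li]$ is well defined on coarse equivalence classes and $\sg([\T]) = \sg([\Li])$.

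Granting this, the corollary follows at once: given a coarse equivalence class $[\T]$ of loose, null-homologous transverse links, pick $\T\in[\T]$ with Legendrian approximation $\Li$; then $\Li$ is loose and null-homologous, so \fullref{thm:supportgenus} yields $\sg([\Li]) = 0$, whence $\sg([\T]) = \sg([\Li]) = 0$. The only genuinely delicate point is the bookkeeping of the second paragraph — above all the invariance of the support genus under negative stabilization, which makes $\sg$ of a transverse link well defined — after which no input beyond \fullref{thm:supportgenus} is required.
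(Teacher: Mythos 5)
Your overall route is the intended one: the paper gives no separate proof of this corollary, but the mechanism is exactly the one used for \fullref{cor:transverselink} --- pass to a Legendrian approximation taken in a neighborhood small enough that an overtwisted disk in the complement of $\T$ survives into the complement of $\Li$ (so looseness is preserved), apply \fullref{thm:supportgenus} to $[\Li]$, and translate back. Your looseness argument via sandwiched standard neighborhoods is more elaborate than the paper's one-line device but reaches the same conclusion.

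There is, however, a genuine gap in your well-definedness step. You set $\sg(\T)=\sg(\Li)$ for an arbitrary Legendrian approximation and justify this by claiming that a stabilization ``can be taken to be supported in a Darboux ball meeting a single page, so it alters neither the open book nor the property of lying on a page.'' That is not correct: a stabilization performed in a Darboux ball takes the knot \emph{off} the page, and to realize a stabilized knot on a page one must positively stabilize the open book --- this is precisely the content of \fullref{lemma:Positive_stab_Leg}. That construction yields $\sg(\Li_\pm)\le\sg(\Li)$, but the reverse inequality (that destabilization does not increase support genus) is not known in general, so ``$\sg$ is unchanged by stabilization'' is unjustified, and with it your claim that $\sg(\T)$ is independent of the choice of approximation. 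The repair is standard and costs nothing: define $\sg(\T)$, as Onaran does, to be the \emph{minimum} of $\sg(\Li)$ over all Legendrian approximations $\Li$ of $\T$ (and likewise for coarse classes). Well-definedness is then automatic, and the corollary follows because $\sg\ge 0$ always, while the particular loose approximation produced above satisfies $\sg([\Li])=0$ by \fullref{thm:supportgenus}; hence the minimum is $0$. With that adjustment your argument is complete and agrees with the paper's implicit proof.
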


\subsection{Organization of the paper}
The paper has been organized in the following way: In \fullref{sec:basics}, we discussed preliminaries of contact geometry and Legendrian knots followed by a discussion of Pontyragin-Thom construction for manifolds with boundary in \fullref{sec:homotopy}. In \fullref{sec:Legtheorem}, we proved \fullref{thm:Legmain} and \fullref{cor:transverse}. We conclude with a proof of \fullref{thm:supportgenus} and \fullref{thm:examples} in \fullref{sec:openbook}.
\vspace{-2 mm}
\subsection{Acknowledgement} I wish to express my deepest gratitude to my advisor Shea Vela-Vick. His guidance, support and motivation over the years have been greatly appreciated. I would also like to thank C.-M. Mike Wong for asking the question which leads to \fullref{thm:examples}. This research is partially supported by NSF Grant 1907654 and the SFB/TRR 191 ``Symplectic Structures in Geometry, Algebra and Dynamics, funded by the Deutsche Forschungsgemeinschaff (Project- ID 281071066-TRR 191)''.

%The author would like to express sincere gratitude to her advisor Shea Vela-Vick. His guidance, support and motivation through out this work have been greatly appreciated. The author would also like to thank Mike Wong for lot of illuminating math conversations.

\section{Basics on contact geometry}
\label{sec:basics}
In this section, we briefly mention the preliminaries of contact geometry and Legendrian knots. For more details the reader should check \cite{etknot}, \cite{etcontact} and \cite{etnyrelectures}.
\subsection{Contact structures}
A contact structure $\xi$ on an oriented $3$-manifold $\M$ is a nowhere integrable $2$-plane field and we call ($\M,\xi$) a contact manifold. We assume that the plane fields are co-oriented, so $\xi$ can be expressed as the kernel of some global one form $\alpha$. In this case, the non-integrability condition is equivalent to $\alpha\wedge d\alpha > 0$. 
There are two types of contact structures--tight and overtwisted.
An overtwisted disk is a disk embedded in a contact manifold $(\M,\xi$) such that $\xi$ is tangent to the boundary of the disk. We call a contact manifold overtwisted, if it contains an overtwisted disk. Otherwise we call it tight.

Though only few results are knows about classifying tight contact structures on manifolds, overtwisted contact structures are completely classified by Eliashberg.
\begin{theorem}{(Eliashberg, \cite{eliashbergovertwist})}Two overtwisted contact structures are isotopic if and only if they are homotopic as plane fields. Moreover, every homotopy class of oriented 2-plane field contains an overtwisted contact structure.
\end{theorem}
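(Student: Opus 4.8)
The forward implication of the biconditional is immediate: an isotopy of contact structures is in particular a homotopy of plane fields, so isotopic overtwisted structures are automatically homotopic as plane fields. The content is thus the converse together with the existence statement, and my plan is to deduce both from a single parametric $h$-principle. Concretely, I would fix a standard overtwisted disk $\Delta$ together with a prescribed germ of contact structure on a neighborhood $\mathrm{Op}(\Delta)$, and let $\mathcal{C}_\Delta(\M)$ be the space of contact structures on $\M$ agreeing with this germ near $\Delta$, and $\mathcal{P}_\Delta(\M)$ the space of oriented plane fields with the same germ. The goal is to prove that the inclusion $\mathcal{C}_\Delta(\M)\hookrightarrow\mathcal{P}_\Delta(\M)$ is a weak homotopy equivalence. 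Surjectivity on $\pi_0$ yields existence — every homotopy class of plane field is realized by an overtwisted contact structure — while injectivity on $\pi_0$ produces a homotopy through contact structures between two overtwisted representatives of a single plane-field class.

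To upgrade such a homotopy of contact structures to the genuine isotopy the theorem demands, I would invoke Gray's stability theorem: on a closed manifold every smooth one-parameter family of contact structures is induced by an ambient isotopy. Combining $\pi_0$-injectivity of the $h$-principle with Gray stability then yields the orientation-preserving diffeomorphism, smoothly isotopic to the identity, carrying one structure to the other. The first technical reduction is a normalization step: using the standard-neighborhood theorem for an overtwisted disk, any overtwisted contact structure can be isotoped so as to contain the fixed germ near $\Delta$, so that no generality is lost in working relative to $\mathrm{Op}(\Delta)$.

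For the $h$-principle itself I would argue by induction over a handle (or cell) decomposition of $\M$ relative to $\mathrm{Op}(\Delta)$. Over a neighborhood of the $1$-skeleton the contact condition is unobstructed and can be arranged by holonomic approximation, since prescribing a contact germ along a $1$-complex imposes no topological obstruction. The substantive work is the extension over the $2$- and $3$-cells, where I would use convex surface theory: encode the attaching data as characteristic foliations and dividing sets on the boundary spheres of the balls, and reduce the extension to a \emph{local flexibility lemma} on $B^3$ — that contact structures with a fixed boundary germ which are joined by a tube to the model disk $\Delta$ are classified, up to the appropriate relative homotopy, precisely by their underlying plane-field data.

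The main obstacle is exactly this local flexibility lemma: showing that the presence of the overtwisted disk allows one to absorb and cancel the obstructions to extending a contact structure across a ball. The standard uniqueness of tight structures on the ball is rigid, and the overtwisted disk is the device that restores flexibility — one must funnel the obstruction along the connecting tube into $\Delta$ and eliminate it there, through a careful analysis of dividing sets and characteristic foliations (the ``filling with holes'' and Legendrian-elimination techniques). Making this argument fully parametric, so that it controls families of structures and yields the weak homotopy equivalence rather than merely a bijection on isotopy classes, is the genuinely delicate part, and is what sharpens the classification to the form stated here.
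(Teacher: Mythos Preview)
The paper does not prove this theorem at all: it is stated in \fullref{sec:basics} as a background result, attributed to Eliashberg and cited to \cite{eliashbergovertwist}, with no accompanying proof or sketch. There is therefore nothing in the paper against which to compare your argument.

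That said, your outline is a reasonable high-level account of Eliashberg's original strategy --- normalize near a fixed overtwisted disk, set up a relative $h$-principle comparing contact structures to plane fields, extend over a skeleton, and absorb obstructions into the overtwisted region, then invoke Gray stability to pass from homotopy to isotopy. If you intend this as an actual proof rather than a road map, the genuine content you have deferred is precisely what you flag at the end: the ``local flexibility lemma'' and the mechanism by which the overtwisted disk kills extension obstructions. In Eliashberg's paper this is not done via convex surface theory (which postdates that work) but via his direct analysis of characteristic foliations and the elimination lemma; your invocation of dividing sets is anachronistic relative to the original, though later reproofs do proceed along lines closer to what you describe. As written, your proposal is a plausible blueprint but not a proof, and since the present paper only \emph{uses} the theorem, no proof is expected here.
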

\subsection{Legendrian links}
A link $\Li$ smoothly embedded in $(\M,\xi)$ is said to be Legendrian if it is everywhere tangent to $\xi$. % There are various ways of classifying Legendrian links which we will discuss later. %On the other hand, a link $T$ is transverse if $T_p T$ is trasverse to $\xi_p$ for all $p$ in $T$.
%One can classify Legendrian links up to isotopy through Legendrian links.That is $L_0$ and $L_1$ are Legendrian isotopic if there is a continuous family of Legendrian links $L_t$ starting at $L_0$ and ending at $L_1$. One can also classify Legendrian links upto contact isotopy. Two links $L_0$ and $L_1$ are ambient contact isotopic if there exist a one parameter family of contactomorphism $\phi_t:M\to M,t\in[0,1]$ such that $\phi_0=id$ and $\phi_1(L_0)=L_1.$
%\subsubsection{Classical invariants}
For the purpose of this paper, by classical invariants of a link we refer to the classical invariants of its components. 
 The classical invariants of a Legendrian knot are the topological knot type, \emph{Thurston--Benniquin invariant} $\tb(\Li)$ and \emph{rotation number} $\rot(\Li)$.
$\tb(\Li)$ measures the twisting of the contact framing relative to the framing given by the Seifert surface of $\Li$. %In other words, choose a vector field $v$ along $\Li$ transverse to $\xi$ and define a parallel link $\Li'$ by pushing $\Li$ along $v$, then $\tb(\Li)$ equals the linking number  of $\Li$ with $\Li'$. Observe that, $\tb(\Li)$ does not depend on the choice of orientation of the link. 
The other classical invariant $\rot(\Li)$ is defined to be the winding of $\T\Li$ after trivializing $\xi$ along the Seifert surface. % as follows: as $\xi|_\Sigma$ is trivial (where $\Sigma$ is the Seifert surface of the link $\Li$), $\rot(\Li)$ is defined to be the winding number of the link with respect to this trivialization. In other words, it is defined as the obstruction of extending a non-vanishing vector field on $L$ to all of $\Sigma$. 
%There is a way to decrease the $\tb$ of a link known as \emph{stabilization}(positive o A stabilization of a link can be expressed as a stabilization of any of its component. A positive (negative resp.) stabilization can be described in the following way: if we have a positive(negative resp.) bypass disk and push the link component along that, then the resultant link component will be called positively (negatively resp.) stabilized. A stabilzation decreases the $\tb$ and increases or decreases the rotation no. of a link component according to the sign of the stabilzation. On the other hand, destabilzation is the reverse process for stabilization. 
%\subsubsection{Types of Classification of links in contact manifolds} Here we only discuss the types of classification for Legendrian links. All of them are also true for transverse links.
One can classify a Legendrian link up to Legendrian isotopy. Two Legendrian links $\Li$ and $\Li'$ are said to be \emph{ Legendrian isotopic} if they are isotopic through Legendrian links.% Two Legendrian links $\Li$ and $\Li'$ in $(\M, \xi)$ are said to be \emph {ambient contact isotopic} if there exists a one parameter family of contactomorphisms $\phi_t:\M\to\M$ such that $\phi_0=id$ and $\phi_1(\Li)=\Li'$. This is well known that these two types of classifications are equivalent in any contact manifold $(\M, \xi)$. 
 There is another type of classification of Legendrian links known as \emph{coarse equivalence}. We say two Legendrian links are \emph{coarsely classified} if they are classified up to orientation preserving contactomorphism, isotopic to the identity. In $(\Sp^3, \xistd)$ these two types of classification are equivalent . But in general a coarse equivalence does not imply Legendrian isotopy. 
%\subsubsection{Standard neighborhood of Legendrian links} 
%The regular neighborhood theorem for Legendrian submanifolds tell us that given a Legendrian knot $\Li$ in $(\M,\xi$), we can always find an appropriate neighborhood of $N(\Li)$ such that $\N(\Li)$ is contactomorphic to the neighborhood $\N_0$ of the image of $x$ axis in $\R^3 /{(x \to x+1)}$ in $(\R^3, \xistd)$. Using this model, we can see that $\partial \N$ is a torus with two dividing curves of slope $\frac{1}{n}$ where $\tb(\Li)=n$. We call $\partial\N$ to be in standard form.  As a link is a disjoint union of $\Sp^1$'s this can be easily extended to a standard neighborhood of a link which is a disjoint union of solid tori each having two dividing curves of slope $\frac{1}{n_i}$ on its boundary where $\tb(\Li_i)=n_i$.

\begin{figure}[!htbp]
\centering
\includegraphics[scale=0.2]{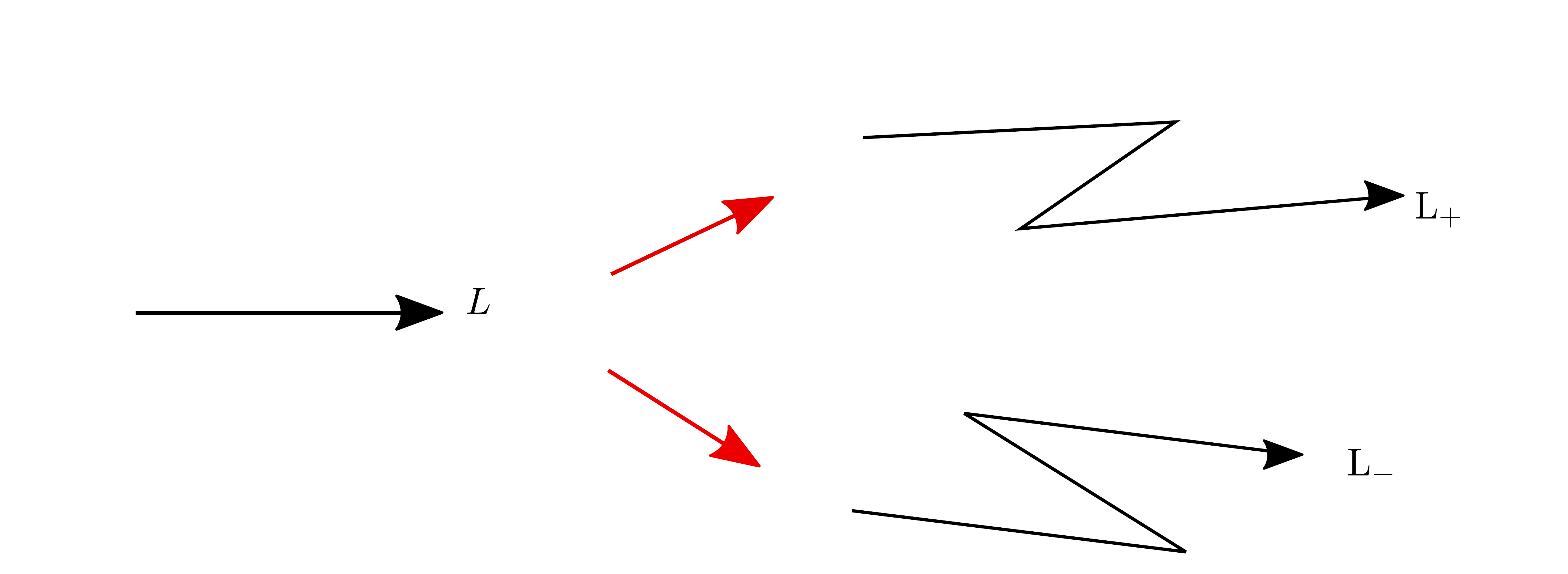}
\caption{Stabilizations of a Legendrian knot.}
\label{fig:stabilization}
\end{figure}
Stabilization of a link can be done by stabilizing any of the link component.
By standard neighborhood theorem of the Legendrian knot, one can identify any Legendrian link component $\Li$ locally with the $x$ axis. Stabilization is a local operation as shown in \fullref{fig:stabilization}. The modification on the top right-side is called the positive stabilization and denoted as $\Li_+$. The modification on the bottom right-side is known as negative stabilizations and denoted as $\Li_-$. It does not matter which order the stabilizations are being done, it just matters where those are being done. The effect of the stabilizations on the classical invariants are as follows:
\[\tb(\Li_\pm)=\tb(\Li)-1 \quad \text{and}\quad \rot(\Li_\pm)=\rot(\Li)\pm 1.\]
\subsection{Transverse link and its relationship with a Legendrian link}
A link $\T$ in ($\M,\xi$) is called transverse (positively) if it intersects the contact planes transversely with each intersection positive. By classical invariant of a transverse link, we will refer to the classical invariants of its components.  There are two classical invariants for transverse knot, the topological knot type and the {\it self-linking number} $\slk(\T)$. Self-linking number is defined for null-homologous knots. Suppose $\Sigma$ be a Seifert surface of a transverse knot. As $\Sigma|_\xi$ is trivial, we can find a non-zero vector field $v$ over $\Sigma$ in $\xi$. Let $\T'$ be a copy of $\T$ obtained by pushing $\T$ slightly in the direction of $v$. The self-linking number $\slk(\T)$ is defined to be the linking no of $\T$ with $\T'$.

%\subsubsection{Relationship between a Legendrian and transverse knot}
%Let $\Li$ be a Legendrian knot in $(\M,\xi$). Let $\A=\Sp^1\times[-1,1]$ where $\Li=\Sp^1\times\{ 0\}$ and $\A$ is transverse to $\xi$. Given such an $\A,\ \Li_+=\Sp^1\times\{\frac{1}{2}\}$ is a positive transverse knot. $\Li_+$ is called the positive transverse push-off $\Li_-$of $\Li$. We can similarly define a negative transverse push-off. One can check that $\Li_\pm$ is well-defined and the classical invariants are related by
Legendrian and transverse links are related by the operations known as transverse push off and Legendrian approximation. The classical invariants of a Legendrian link component and its transverse push off are related as follows:
\[ \slk(\Li_\pm)=\tb(\Li)\mp\rot(\Li)\] where $\Li_\pm$ denotes the positive and negative transverse push offs.
 In this paper, if we mention transverse push-off it is always the positive transverse pushoff unless explicitly stated otherwise. Note that, while a transverse push off is well defined, a Legendrian approximation is only well defined up to negative stabilizations. %Similarly for a transverse knot, we have a Legendrian approximation. Check \cite{etknot} to see how to find a Legendrian approximation for a transverse knot. Unlike transverse push-offs Legendrian approximations are not well- defined. They are only well-defined upto negative stabilization \cite{eth}.
\subsection{Open book decomposition and supporting contact structures}
Recall an \emph{open book decomposition} of a $3$-manifold $\M$ is a triple $(\B,\Sigma,\phi)$ where $\B$ is a link in $\M$ such that $\M\setminus\B$ fibers over the circle with fiber $\Sigma$ and monodromy $\phi$ so that $\phi$ is identity near the boundary and each fiber of the fibration is a Seifert surface for $\B$. By saying $\phi$ is the monodromy of the fibration we mean that $\M\setminus\B= \Sigma\times[0,1]/\sim$ where $(1,x)\sim(0,\phi(x))$. The fibers of the fibration are called \emph{pages} of the open book and $\B$ is called the \emph{binding}. Given an open book $(\B,\Sigma,\phi)$ for $\M$, let $\Sigma'$ be $\Sigma$ with a $ 1$-handle attached. Suppose $c$ is a simple closed curve that intersects the cocore of the attached 1-handle exactly once. Set $\phi'=\phi\circ D_c^+$, where $D_c^+$ is a right handed Dehn-twist along $c$. The new open book $(\B',\Sigma',\phi')$ is known as the \emph{positive stabilization} of $(\B,\Sigma,\phi)$. If we use $D_c^-$ instead, that will be called a \emph{negative stabilization}. For details check \cite{etplanar}.

 We say a contact structure $\xi=\ker\alpha$ on $\M$ is supported by an open book decomposition ($\B, \Sigma,\phi)$ of $\M$ if
 \begin{enumerate}
 
  \item  $d\alpha$ is a positive area form on the page of the open book.
  \item  $\alpha(v)>0$, for each oriented tangent vector to $\B$.
  \end{enumerate}
Given an open book decomposition of a 3-manifold $\M$, Thurston and Winkelnkemper \cite{thurston} showed how one can produce a compatible contact structure. Giroux proved that two contact structures which are compatible with the same open book are isotopic as contact structures \cite{giroux}. Giroux also proved that two contact structures are isotopic if and only if they are compatible with open books which are related by positive stabilizations.

It is well known that every closed oriented 3-manifold has an open book decomposition. We can perform an operation called \emph{Murasugi sum} to connect sum two open books and produce a new open book. An interested reader should check \cite{etnyrelectures} for details.
\section{Homotopy classes of 2-plane fields}
\label{sec:homotopy}
In this section, we review the homotopy theory of plane fields in the complement of a link. Specifically, we will study homotopy classes of 2-plane fields on manifolds with boundary. We start by recalling, Pontyragin-Thom construction associated with manifolds with boundary (For Pontyragin-Thom construction for closed manifolds see \cite{milnor})
\subsection{Pontyragin-Thom construction for manifolds with boundary}
 Suppose $\M$ be an oriented manifold with boundary. The space of oriented plane-fields on $\M$ will be denoted as $\mathcal{P}(\M)$. On the other hand, if $\eta$ is a plane-field defined on the boundary of $\M$, then the set of all plane fields that extend $\eta$ to all of $\M$ will be denoted by $\mathcal{P}(\M,\eta)$. $\mathcal{V}(M)$ will be the set of all unit vector fields and $\mathcal{V}(M,v)$ will denote the set of all unit vector fields which extend $v$ to all of $\M$. Here $v$ is the unit vector field defined along $\partial M$. Also observe the sets $\mathcal{P}(\M,\eta)$ and $\mathcal{V}(\M,v)$ can be empty depending on $\eta$ and $v$.
 
 After choosing a Riemannian metric on $\M$ we can associate a unit vector field to an oriented plane field in the following way: We send a unit vector field $v$ to the plane field $\eta$ such that $v$ followed by the oriented basis of $\eta$ orients $\TM$. Thus there is a one-to-one correspondence between $\mathcal{P}(\M)$ and $\mathcal{V}(M)$. Similarly for $\mathcal{P}(\M,\eta)$ and $\mathcal{V}(\M,v)$ where $v$ is the unit vector field along the boundary associated to $\eta$ by a choice of metric and orientation. Notice both the correspondences only depend on a choice of metrics.
 
 We know that any 3-manifold has trivial tangent bundle. Thus fixing some trivialization we can write $\TM\simeq\M\times\R^3$. So the unit tangent bundle $\mathrm{UTM}$ can be identified with $\M\times\Sp^2$. Any unit vector field on $\M$ can be defined as a section of this bundle and can be associated to a map $\M\to\Sp^2$. We can identify $\mathcal{V}(\M)$ with $[\M, \Sp^2]$. Similarly if $v$ is a unit vector field on $\partial M$, we can associate  it with a map $f_v\colon\partial\M\to{\Sp}^2$. Thus $\mathcal{V}(\M,v)$ can be identified with the maps from $\M$ to ${\Sp}^2$ which coincides with $f_v$ on the boundary, denoted by $[\M,\Sp^2;f_v]$.
 
 Now Suppose $f_v\colon\partial M\to\Sp^2$ misses the north pole $p$. Now given any $f\in[\M, \Sp^2;f_v]$ we can homotope it so that it is transverse to the north pole (Thus $p$ will be a regular value for $f$). Then $f^{-1}(p)=\Li_f$ will be in the interior of $\M$ with framing $\bf{f}_f $ given by $f^*(T\Sp^2|_p)$. As $f$ homotopes through maps in $[\M,\Sp^2;f_v]$ the link $(\Li_f,\bf{f}_f)$ changes by framed cobordism. Thus any $v$ defined on $\partial \M$ which extends to $\M$ can be associated to framed cobordism classes of link. This gives us the relative version of Pontyragin-Thom construction.
 \begin{remark}
 Notice, this construction works fine if $\M$ has multiple boundary components.
 \end{remark}
 \begin{lemma}
\label{lemma:pont}
Assume that $\eta$ is a plane field defined along the boundary of $\M$ that in some trivialization of $\mathrm{TM}$ corresponds to a function that misses the north pole of $\Sp^2$. There is a one-to-one correspondence between homotopy classes of plane fields on $\M$ that extend $\eta$ on $\M$ and the set of framed links in the interior of $\M$ up to framed cobordism.
\end{lemma}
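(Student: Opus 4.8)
The plan is to carry out Milnor's proof of the Pontryagin--Thom theorem \cite{milnor} relative to $\partial\M$. Fix a Riemannian metric and a trivialization $\TM\cong\M\times\R^3$; as explained above, these identify homotopy classes of plane fields extending $\eta$ with $[\M,\Sp^2;f_v]$, where $f_v\colon\partial\M\to\Sp^2$ is the boundary map associated to $\eta$ and, by hypothesis, misses the north pole $p$. Write $-p$ for the south pole. The construction $f\mapsto(\Li_f,\mathbf{f}_f)=(f^{-1}(p),f^*(T\Sp^2|_p))$ sketched above already produces a framed link in the interior from a class in $[\M,\Sp^2;f_v]$; what remains is to show this assignment is well defined on homotopy classes and admits an inverse.

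For well-definedness, note that near $\partial\M$ the map $f$ takes values in $\Sp^2\setminus\{p\}$, so it is already transverse to $p$ there; by the relative transversality theorem one may homotope $f$, rel a collar of $\partial\M$, to be transverse to $p$ everywhere, and any two such perturbations of a fixed $f$ differ by a homotopy rel that collar. If $F\colon\M\times[0,1]\to\Sp^2$ is a homotopy between two transverse maps $f_0,f_1$ through maps restricting to $f_v$ on $\partial\M$, then $F$ too takes values in $\Sp^2\setminus\{p\}$ near $\partial\M\times[0,1]$, so $F$ may be perturbed --- rel $\M\times\{0,1\}$ and rel a neighborhood of $\partial\M\times[0,1]$ --- to be transverse to $p$, and then $F^{-1}(p)$ is a framed cobordism in $\mathrm{int}(\M)\times[0,1]$ from $\Li_{f_0}$ to $\Li_{f_1}$. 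Hence the assignment $[\M,\Sp^2;f_v]\to\{\text{framed links in }\mathrm{int}(\M)\}/\text{framed cobordism}$ is well defined.

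For the inverse, I would run the Pontryagin--Thom collapse. Since $f_v$ has image in the contractible space $\Sp^2\setminus\{p\}$ it is homotopic there to the constant $c_{-p}$, and the space of maps $\partial\M\to\Sp^2\setminus\{p\}$ being contractible, this homotopy is essentially unique; via a collar and the homotopy extension property it induces a canonical bijection $[\M,\Sp^2;f_v]\cong[\M,\Sp^2;c_{-p}]$, so we may as well take $f|_{\partial\M}\equiv -p$. Given a framed link $(\Li,\mathbf{f})$ in $\mathrm{int}(\M)$, use $\mathbf{f}$ to identify a tubular neighborhood of $\Li$ with $\Li\times\D^2$ and define $\Psi(\Li,\mathbf{f})$ to equal $-p$ outside this neighborhood and, on $\Li\times\D^2$, the projection to $\D^2$ followed by a fixed collapse $\D^2/\partial\D^2\cong\Sp^2$ sending $0\mapsto p$; this is $-p$ near $\partial\M$ because $\Li$ is interior. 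Then $\Phi\circ\Psi=\mathrm{id}$ is immediate, and $\Psi\circ\Phi=\mathrm{id}$ follows from Milnor's Product Neighborhood Theorem: after a homotopy, a transverse $f$ agrees with $\Psi(\Li_f,\mathbf{f}_f)$ on a tubular neighborhood of $\Li_f$, while on the complement it maps into $\Sp^2\setminus\{p\}$ and so can be homotoped rel $\partial\M$ to $c_{-p}$. Independence from the auxiliary choices (metric, tubular neighborhood, the collapse $\D^2/\partial\D^2\cong\Sp^2$) is standard, exactly as in the closed case. Note that the empty link corresponds to $c_{-p}$, so both sides are in particular nonempty.

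The one genuinely non-routine point --- and the only place where ``manifold with boundary'' matters --- is checking that every perturbation and homotopy above can be performed relative to a collar of $\partial\M$. This is exactly where the hypothesis that $\eta$ corresponds to a map missing $p$ is used: on a collar the relevant maps never meet $p$, so the Pontryagin--Thom constructions are vacuous there and constrain nothing. Everything else is word-for-word Milnor's argument and, as the remark observes, is insensitive to the number of components of $\partial\M$.
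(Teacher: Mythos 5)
Your proposal is correct and follows essentially the same route as the paper, which presents the relative Pontryagin--Thom construction in the paragraphs preceding the lemma and leaves the verification (transversality and homotopies rel a collar, the collapse map as inverse) implicit with a reference to Milnor. Your write-up simply supplies those standard details, correctly isolating the role of the hypothesis that $f_v$ misses the north pole as the reason the boundary imposes no constraint.
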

%\begin{remark}
%The relative version os Pontyragin- Thom construction was briefly mentioned in \cite{milnor}. The above lemma has been used in the past but no where mentioned explicitly till \cite{et}.
%\end{remark}
For the closed case, the following proposition was proved in \cite{gompf}.

\begin{proposition}
Let $\M$ be a closed, connected 3-manifold. Then any trivialization $\tau$ of the tangent bundle of $\M$ determines a function $\Gamma_\tau$ sending homotopy classes of oriented $2$-plane fields $\xi$ on $\M$ into $\Ho_1(\M,\Z)$ and for any $\xi$, $2\Gamma_\tau(\xi)$ is Poincar\'{e} dual to $c_1(\xi)\in\Ho^2(\M,\Z)$. For any fixed $x\in \Ho_1(\M,\Z)$, the set $\Gamma^{-1}(x)$ of classes of 2 plane-fields $\xi$ mapping to $x$ has a canonical $\Z$ action and is isomorphic to $\Z/d(\xi)$, where $d$ is the divisibility of the chern class.
\end{proposition}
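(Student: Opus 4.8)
The plan is to reduce everything to Pontryagin--Thom theory for maps $\M\to\Sp^2$, exactly as in \fullref{sec:homotopy} but in the closed case, followed by a single obstruction-theoretic (equivalently, framed-cobordism) computation.

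Fix the trivialization $\tau$, so $\TM\cong\M\times\R^3$ and a co-oriented $2$-plane field $\xi$ is recorded by its positive unit normal vector field, i.e.\ by a map $f_\xi\colon\M\to\Sp^2$; this is a bijection between homotopy classes of oriented $2$-plane fields on $\M$ and $[\M,\Sp^2]$, under which $\xi=f_\xi^*(\T\Sp^2)$ as oriented bundles. Making $f_\xi$ transverse to the north pole $p$ and applying the (absolute) Pontryagin--Thom construction, I would set $\Gamma_\tau(\xi):=[f_\xi^{-1}(p)]\in\Ho_1(\M;\Z)$. Framed cobordant links are homologous, so this is independent of the representative and $\Gamma_\tau$ is well defined; it is onto, since any class in $\Ho_1(\M;\Z)$ is carried by an embedded framed link, which is the Pontryagin manifold of some map to $\Sp^2$. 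For the Chern-class statement, Pontryagin--Thom gives that $\Gamma_\tau(\xi)$ is Poincar\'e dual to $f_\xi^*\omega$, where $\omega\in\Ho^2(\Sp^2;\Z)$ is the positive generator, while $c_1(\xi)=e(\xi)=f_\xi^*e(\T\Sp^2)=f_\xi^*(2\omega)$ because $\chi(\Sp^2)=2$; hence $\mathrm{PD}(c_1(\xi))=2f_\xi^*\omega$ is dual to $2\Gamma_\tau(\xi)$ (and in particular $c_1(\xi)$ is divisible by $2$ in $\Ho^2(\M;\Z)$, which is what lets $\Gamma_\tau$ take values in honest homology classes).

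For the fiber $\Gamma^{-1}(x)$, I would use Pontryagin's description of $[\M,\Sp^2]$ by framed links: an element of $\Gamma^{-1}(x)$ is a framed link $(L,\mathrm{fr})$ with $[L]=x$, taken up to framed cobordism. After tubing components together we may fix a connected representative $L_0$ of $x$; any $(L,\mathrm{fr})$ with $[L]=x$ is cobordant to $L_0$ as an unframed $1$-manifold, and transporting its framing across such a cobordism shows $\Gamma^{-1}(x)$ is the set of framings of $L_0$ modulo framed cobordism. The framings of $L_0\cong\Sp^1$ form a $\Z$-torsor under their total twisting; this yields the asserted canonical $\Z$-action, which can be described without reference to $L_0$ as connect-summing $f_\xi$ with the Hopf map $\Sp^3\to\Sp^2$ inside a ball (equivalently, inserting a small $(\pm1)$-framed unknot). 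Two framings of $L_0$ differing by $m\in\Z$ are framed cobordant exactly when $m$ lies in a subgroup $I\le\Z$; one computes $I$ via finger moves sliding an arc of $L_0$ around loops in $\M$, where the $\chi(\Sp^2)=2$ factor above promotes the naive answer $2\langle f_\xi^*\omega\smile\beta,[\M]\rangle$ to $\langle c_1(\xi)\smile\beta,[\M]\rangle$ for $\beta\in\Ho^1(\M;\Z)$. By Poincar\'e duality the subgroup generated by these numbers is $d(\xi)\,\Z$, where $d(\xi)$ is the divisibility of $c_1(\xi)$, so $\Gamma^{-1}(x)\cong\Z/d(\xi)\Z$.

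I expect the computation of $I$ to be the main obstacle. Phrased via obstruction theory for homotopies $\M\times I\to\Sp^2$ rel $\M\times\partial I$: since $\pi_1(\Sp^2)=0$ and the primary $\pi_2$-obstruction is $f^*\omega-f_{\xi_0}^*\omega=0$, such a homotopy exists over the $3$-skeleton, and the remaining obstruction lives in $\Ho^4(\M\times I,\M\times\partial I;\pi_3(\Sp^2))\cong\Ho^3(\M;\Z)\cong\Z$ (here $\M$ is closed, connected, oriented); the delicate point is to identify its indeterminacy, coming from the choices of extension over the $3$-skeleton together with the composition/Whitehead operation attached to $\eta\in\pi_3(\Sp^2)$, as exactly $\langle c_1(\xi)\smile\Ho^1(\M;\Z),[\M]\rangle$. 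The remaining ingredients — the $\tau$-dependent dictionary between plane fields and maps to $\Sp^2$, well-definedness and surjectivity of $\Gamma_\tau$, and the Euler-class identity $c_1(\xi)=2f_\xi^*\omega$ — are essentially formal.
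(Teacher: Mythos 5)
Your proposal is correct, and it follows the same route the paper relies on: the paper itself gives no proof of this proposition, quoting it from \cite{gompf}, and both Gompf's argument and the paper's relative version in \fullref{sec:homotopy} use exactly your Pontryagin--Thom dictionary (plane fields $\leftrightarrow$ maps $\M\to\Sp^2$ $\leftrightarrow$ framed links up to framed cobordism, with $c_1(\xi)=2f_\xi^*\omega$ and the $\Z$-action by connect sum with the Hopf map). The one step you rightly flag as delicate --- that the stabilizer of the $\Z$-action is $2\{\langle f_\xi^*\omega\smile\beta,[\M]\rangle:\beta\in\Ho^1(\M;\Z)\}=d(\xi)\Z$ --- is precisely Pontryagin's computation and is the content of the paper's subsequent unproved lemma ($\phi^{-1}(\Li)=\Z/d(2\Li)$) in the relative setting, so your sketch is consistent with the source.
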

Now suppose $\M$ is a manifold with boundary and $\mathcal{F}(\M)$ denotes the set of all cobordism classes of framed link in the interior of $\M$. Then there is a homomorphism 
\[\phi\colon\mathcal{F}\to\Ho_1(\M,\Z)\]
such that
\[ (\Li_f,f)\to [\Li].\] This map is clearly surjective. We want to compute the preimage of this map. First notice, there is a natural intersection pairing between $\Ho_1(\M)$ and $\Ho_2(\M,\partial M)$. Let $i\colon(\M,\emptyset)\to(\M,\partial\M)$ induces the map $i_*\colon\Ho_2(\M,\Z)\to\Ho_2(\M,\partial \M,\Z)$. For $\Li\in\Ho(\M,\Z)$, set
\[D_\Li=\{\Li\cdot[\Sigma]:\ {\text where}\ \Sigma\in i_*(\Ho_2(\M,\Z))\}\]
where $\Li\cdot\Sigma$ denotes the intersection pairing. Clearly this is a subset of $\Z$. Suppose $d(\Li)$ is the smallest non-negative integer in $D_\Li$.
\begin{lemma}
With the notations above, \[\phi^{-1}(\Li)=\Z/d(2\Li).\]
\end{lemma}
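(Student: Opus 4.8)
The plan is to mimic the structure of Gompf's proposition (quoted above for the closed case), adapting it to the bordered setting via the relative Pontryagin--Thom correspondence of \fullref{lemma:pont}. Fix a trivialization $\tau$ of $\TM$, so that homotopy classes of plane fields on $\M$ extending the given $\eta$ correspond to framed links in $\mathrm{int}(\M)$ up to framed cobordism, i.e. to elements of $\mathcal{F}(\M)$; under this dictionary the underlying homology class of the link is exactly $\phi$. So I must understand, for a fixed $\Li\in\Ho_1(\M,\Z)$, the set of framed cobordism classes of links lying in the fiber $\phi^{-1}(\Li)$. The natural $\Z$-action is the familiar one: given a framed link representing a class, change the framing on one component by a full twist (equivalently, take connected sum with a small $0$-framed, then re-framed, unknot), which shifts the Pontryagin--Thom invariant by $1$ without changing the homology class. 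The content of the lemma is that this action is transitive and that its stabilizer is precisely $d(2\Li)\Z$.

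First I would establish transitivity on $\phi^{-1}(\Li)$: any two framed links with the same underlying $\Z$-homology class are framed-cobordant after adjusting framings, because the obstruction to a cobordism of the underlying (unframed) links rel nothing is exactly their difference in $\Ho_1$, which vanishes, and once the underlying links cobound an embedded surface in $\M\times[0,1]$, the framings differ by an integer that the $\Z$-action absorbs. (This is the standard "framed links up to cobordism $\cong$ homology $\oplus$ framing-defect" bookkeeping, done relative to the boundary; one should be a little careful that the cobordism stays in the interior, which is where having $\M$ with boundary rather than closed actually helps — there is room to push things off $\partial\M$.) Next I would compute the stabilizer of a fixed framed link $L_0$ in $\phi^{-1}(\Li)$: an element $n\in\Z$ stabilizes $L_0$ iff adding $n$ to the total framing yields a framed cobordism back to $L_0$, i.e. iff $n$ is realized as the self-intersection correction coming from an embedded closed surface in $\M\times[0,1]$ with boundary $L_0\sqcup(-L_0)$ capped off — which, after closing up, is governed by the intersection number of $\Li$ with classes in $i_*(\Ho_2(\M,\Z))$, doubled. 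The doubling is the same phenomenon as in Gompf's statement ($2\Gamma_\tau(\xi)$ is Poincaré dual to $c_1$): a homotopy of plane fields supported near a closed surface $\Sigma$ changes the Pontryagin--Thom link's framing data by $\Li\cdot 2[\Sigma]$, because the relevant degree comes from the Euler class computation on the double of a collar, not $\Sigma$ itself. Hence the stabilizer is $D_{2\Li}=\{2\,(\Li\cdot[\Sigma]):\Sigma\in i_*\Ho_2(\M)\}\cdot$, whose positive generator is $d(2\Li)$, giving $\phi^{-1}(\Li)\cong\Z/d(2\Li)$.

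The main obstacle I expect is the factor of $2$ and, relatedly, pinning down exactly which closed surfaces are available in the bordered setting. In the closed case Gompf gets this from a clean Euler-class argument on all of $\M$; here the surface producing a self-cobordism lives in $\M\times[0,1]$ rel boundary, and I need to see that capping it off and computing the framing change reduces precisely to intersection with $i_*(\Ho_2(\M,\Z))$ — not all of $\Ho_2(\M,\partial\M,\Z)$ — with the extra factor of two. Getting the bookkeeping of relative versus absolute second homology right, and verifying that no additional framing changes sneak in from the boundary collar, is the delicate point; everything else is a routine transcription of the closed-manifold argument through \fullref{lemma:pont}.
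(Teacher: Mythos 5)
The paper states this lemma without proof --- Section 3 ends immediately after the statement, with the result presumably taken as the known relative analogue of Gompf's computation from \cite{gompf} (as used by Etnyre in \cite{et}) --- so there is no argument in the text to compare yours against. Your outline is the standard and correct route: identify $\phi^{-1}(\Li)$ via \fullref{lemma:pont} with framed links in a fixed homology class, let $\Z$ act by twisting the framing (equivalently, acting by $\pi_3(\Sp^2)$), prove transitivity by finding an embedded cobordism in $\M\times[0,1]$ between homologous links and absorbing the framing defect, and compute the stabilizer as $2D_\Li$, whose positive generator is $d(2\Li)$. You also correctly identify the two genuinely delicate points: why only classes in $i_*(\Ho_2(\M,\Z))$ (rather than all of $\Ho_2(\M,\partial\M,\Z)$) contribute, and where the factor of $2$ comes from.

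The one place your argument is still a gesture rather than a proof is exactly that stabilizer computation. The phrase ``the Euler class computation on the double of a collar'' does not correspond to a precise mechanism; the actual source of the $2$ is that $e(T\Sp^2)=2[\Sp^2]$, so $f^*e(T\Sp^2)=2\,PD[\Li_f]$ (this is the same computation the paper carries out inside the proof of \fullref{lemma:homotopic}), or geometrically: a self-cobordism of $(\Li_0,fr)$ is, up to homology rel boundary, the product cylinder together with a closed surface $S$ in the interior, the cylinder and $S\times\{pt\}$ meet generically in $\Li_0\cdot S$ points in the four-manifold $\M\times[0,1]$, and resolving each intersection to obtain an embedded framed cobordism changes the framing by $\pm 2$. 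Writing out that step (and checking that closed components of the cobordism surface land in $\Ho_2(\M;\Z)$, which is where the restriction to $i_*(\Ho_2(\M,\Z))$ comes from) is what separates your sketch from a complete proof; everything else is routine.
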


\section{Classification of loose Legendrian links}
\label{sec:Legtheorem}
There are two types of links in an overtwisted contact manifold, namely loose (also known as non-exceptional) and loose (also known as exceptional). A Legendrian link $\Li$ is called loose if the contact structure restricted to its complement is overtwisted. Otherwise, it is called non-loose. In other words, a loose link must have an overtwisted disk disjoint from it.
\begin{remark}
Note that, for a loose Legendrian link, all of its components must be loose. But a non-loose link can have loose components. In fact, a non-loose link can have all its components loose.
\end{remark}
The following is our main theorem in this section.
\begin{theorem}
 \label{thm:main}
 Suppose $\Li$ and $\Li'$ are two Legendrian $n$-component links in $(\M,\xi)$ with all of their components null-homologous. We fix their Seifert surfaces. If $\Li$ and $\Li'$ are topologically isotopic, $\tb(\Li_i)=\tb(\Li_i')$ and $\rot(\Li_i)=\rot(\Li_i')$ for $i=1\dots n$ (where the classical invariants are defined using the fixed Seifert surfaces), then $\Li$ and $\Li'$ are coarsely equivalent.
 \end{theorem}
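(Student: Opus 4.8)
The plan is to adapt Etnyre's argument for knots in \cite{et} to links and to an arbitrary ambient manifold; the genuinely new ingredient is the relative Pontryagin--Thom bookkeeping of \fullref{sec:homotopy}. Throughout one uses that $\Li$ and $\Li'$ are loose, i.e. that $\M\setminus\Li$ and $\M\setminus\Li'$ are overtwisted. \emph{Step 1: reduce to one smooth link carrying two homotopic contact structures.} Since $\Li$ and $\Li'$ are topologically isotopic, pick a smooth ambient isotopy $\phi_t$ of $\M$ with $\phi_0=\mathrm{id}$ and $\phi_1(\Li)=\Li'$, respecting the labelling of components and (after a further adjustment) the chosen Seifert surfaces. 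Set $\xi':=\phi_1^{*}\xi$. Then $\phi_1\colon(\M,\xi')\to(\M,\xi)$ is a contactomorphism sending $\Li$ to $\Li'$, the link $\Li$ is Legendrian for $\xi'$, its classical invariants computed against the $\Sigma_i$ in $(\M,\xi')$ equal those of $\Li'$ in $(\M,\xi)$ and hence those of $\Li$ in $(\M,\xi)$, both $\xi$ and $\xi'$ are overtwisted with overtwisted complement of $\Li$, and $\xi'\simeq\xi$ as plane fields because $\phi_1$ is isotopic to the identity. It thus suffices to build a contactomorphism $g\colon(\M,\xi)\to(\M,\xi')$ with $g(\Li)=\Li$ that is isotopic to the identity, since then $\phi_1\circ g$ is the desired coarse equivalence.

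\emph{Step 2: remove standard neighborhoods and reduce to a homotopy question.} Applying the Legendrian neighborhood theorem to each $\Li_i$ and using that $\tb(\Li_i)$ is the same for $\xi$ and $\xi'$ — so that no framing twist is forced and the standard neighborhoods can be matched by a contactomorphism fixing $\Li_i$ and isotopic to the identity — one arranges, after a diffeomorphism of $\M$ isotopic to the identity and supported near $\Li$ which we absorb into $\phi_1$, that $\xi=\xi'$ on a small neighborhood $N=\bigsqcup_i N_i$ of $\Li$. Let $W=\M\setminus\operatorname{int}N$, a compact $3$-manifold with $\partial W$ a disjoint union of $n$ tori. Shrinking $N$ so that some overtwisted disk disjoint from $\Li$ lies in $W$, both $\xi|_W$ and $\xi'|_W$ are overtwisted and they agree near $\partial W$. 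By the relative version of Eliashberg's classification \cite{eliashbergovertwist} it now suffices to show $\xi|_W\simeq\xi'|_W$ rel $\partial W$: one then gets a path of contact structures on $W$ joining them and constant near $\partial W$, extends it by the constant family over $N$, and runs Gray stability; the generating vector field vanishes on $N$, so the time-one map $g$ fixes $N$ (hence $\Li$), satisfies $g_{*}\xi=\xi'$, and is isotopic to the identity — exactly what Step 1 asks for.

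\emph{Step 3: the complementary plane field is pinned down by $\tb$ and $\rot$.} By \fullref{lemma:pont} and the lemma following it, homotopy classes rel $\partial W$ of plane fields extending the common boundary germ correspond to framed-cobordism classes of framed links in $\operatorname{int}W$, and within a fixed underlying class $c\in\Ho_1(W,\Z)$ they form a torsor over $\Z/d(2c)$. Two invariants of the Pontryagin--Thom links of $\xi|_W$ and $\xi'|_W$ must therefore be matched. First, their $\Ho_1(W,\Z)$-classes: under $\Ho_1(W,\Z)\to\Ho_1(\M,\Z)$ these map to the Pontryagin--Thom classes of $\xi$ and $\xi'$, which coincide since $\xi\simeq\xi'$; the kernel is generated by the meridians of the $\Li_i$, and pairing against the properly embedded surfaces obtained from the $\Sigma_i$ detects the rotation numbers $\rot(\Li_i)$ — the link version of the standard relation between $\rot$ and the homotopy invariant of a plane field, cf. \cite{gompf} — so equality of all $\rot(\Li_i)$ forces the kernel-part of the difference to vanish and the $\Ho_1$-classes agree. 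Second, the residual $\Z/d(2c)$-invariant: since $\xi=\xi'$ on $N$, their restrictions to $W$ differ by a modification supported in a ball in $\operatorname{int}W$ by some integer $m$; as $\xi\simeq\xi'$ on all of $\M$, Gompf's proposition forces $m$ to be divisible by the divisibility of $c_1(\xi)$ over $\M$, which in turn — every closed surface in $\M$ being isotopic off $\Li$, hence into $W$ — is a multiple of $d(2c)$ by the second lemma of \fullref{sec:homotopy}; thus $m\equiv 0$ in $\Z/d(2c)$, and $\xi|_W\simeq\xi'|_W$ rel $\partial W$.

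I expect Step 3, and within it the control of the residual $\Z/d(2c)$-invariant, to be the main obstacle: this indeterminacy has no analogue for plane fields on closed $3$-manifolds and is precisely what the relative Pontryagin--Thom machinery of \fullref{sec:homotopy} is built to handle, while Steps 1 and 2 are a routine (if somewhat lengthy) component-by-component reworking of the knot argument of \cite{et}. One further point to verify carefully is the compatibility of the fixed Seifert surfaces under the topological isotopy in Step 1: the Thurston--Bennequin numbers are insensitive to this choice by uniqueness of the null-homologous longitude, but the rotation numbers are not, so $\phi_1$ must be chosen to carry each $\Sigma_i$ to a Seifert surface for $\Li_i'$ representing the same relative homology class as the fixed surface of $\Li_i'$.
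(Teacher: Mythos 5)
Your proposal follows essentially the same route as the paper: reduce by ambient isotopy to a single link carrying two contact structures that agree on a standard neighborhood, show the restrictions to the complement are homotopic rel boundary via the relative Pontryagin--Thom correspondence (matching the $\Ho_1$-class using the rotation numbers and the residual framing modulo $d(2c)$ using the global homotopy of plane fields), and conclude with Eliashberg's classification of overtwisted contact structures. The one ingredient the paper makes explicit that you leave implicit is the construction of a boundary-adapted trivialization --- a frame $(-\V_1,\V_2,\V_3)$ built from the Reeb field and a tangent field along $\Li$, together with a half Lutz twist in the complement to kill the relative Euler class so that $\V_2$ extends --- which is what makes \fullref{lemma:pont} applicable and turns ``intersection with $\Sigma_i$'' literally into $\rot(\Li_i)$ or $\rot(\Li_i)+1$.
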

 In other words, there is a unique loose Legendrian link with the components having fixed $\tb$ and $\rot$  up to contactomorphism.
  Before we begin proving this, we need the following lemma:

 \begin{lemma}
 \label{lemma:homotopic}
  Suppose $\Li$ and $\Li'$ be two Legendrian $n$-component links in $(\M,\xi)$ with each of their components being null-homologous. Suppose they are topologically isotopic, $\tb(\Li_i)=\tb(\Li_i')$ and $\rot(\Li_i)=\rot(\Li_i')$ for $i=1\dots n$, then $\xi|_{\M\setminus \N(\Li)}$ is homotopic to  $\xi|_{\M\setminus \N(\Li')}$ rel boundary as plane fields.
 \end{lemma}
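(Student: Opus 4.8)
The plan is to reduce the statement to a computation in the relative Pontryagin–Thom picture of Section~\ref{sec:homotopy}. Write $X = \M \setminus \N(\Li)$ and $X' = \M \setminus \N(\Li')$. Since $\Li$ and $\Li'$ are topologically isotopic, there is an ambient diffeomorphism of $\M$, smoothly isotopic to the identity, carrying $\N(\Li)$ to $\N(\Li')$; pulling back, it suffices to work on a single manifold with boundary $X$ and to compare the two plane fields $\xi|_X$ and $(\xi|_{X'})$ transported to $X$. Both plane fields restrict, on each boundary torus $T_i = \partial \N(\Li_i)$, to the canonical characteristic-foliation-type data on the boundary of a standard Legendrian neighborhood; because $\tb(\Li_i) = \tb(\Li_i')$, the two Legendrian neighborhoods $\N(\Li_i)$ and $\N(\Li_i')$ are contactomorphic by the Legendrian neighborhood theorem, and one can choose the identification of the $T_i$ so that the boundary plane fields literally agree. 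So the two plane fields lie in the same set $\mathcal P(X,\eta)$ of extensions of a common boundary germ $\eta$, and by \fullref{lemma:pont} they are homotopic rel boundary if and only if the associated framed links $(\Li_f, \mathbf f_f)$ and $(\Li_{f'}, \mathbf f_{f'})$ in the interior of $X$ are framed-cobordant.

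Next I would identify what the framed-link invariant detects. By the relative Pontryagin–Thom construction, the underlying homology class in $\Ho_1(X,\Z)$ of the framed link is the first obstruction, and the remaining freedom, once that class is fixed, is a $\Z/d(2\Li_f)$ worth of data measured by a relative rotation/$d_3$-type invariant (this is exactly the content of the final lemma of \fullref{sec:homotopy}). For the homology class: $\Ho_1(X,\Z)$ is generated by $\Ho_1(\M,\Z)$ together with the meridians $\mu_i$ of the $\Li_i$, and the class of the framed link $\Li_f$ attached to $\xi|_X$ is controlled by $c_1(\xi)$ together with the boundary data. Because the ambient plane field $\xi$ is the same in both cases (we only removed different but isotopic neighborhoods), the $\Ho_1(\M)$-part agrees; the meridional coefficients are pinned down by the boundary germ $\eta$, which we have already arranged to agree, and these coefficients are precisely read off from $\tb(\Li_i)$. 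So the homology classes coincide. For the residual $\Z/d$-invariant, one trivializes $\TM$ over all of $\M$ (using that $3$-manifolds are parallelizable), computes the Gompf-type $\Gamma_\tau$ invariant of $\xi$ relative to the fixed trivialization, and compares; the correction term coming from the removed solid tori is expressed through the rotation numbers $\rot(\Li_i)$ relative to the chosen Seifert surfaces. Equality of all $\rot(\Li_i)$ (with respect to the fixed Seifert surfaces, as in the hypothesis) forces the two residual invariants to agree.

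Concretely, the key steps in order are: (1) use the Legendrian neighborhood theorem and $\tb(\Li_i)=\tb(\Li_i')$ to arrange that, after an ambient isotopy, $\Li$ and $\Li'$ have literally identical standard neighborhoods, so $\xi|_X$ and $\xi|_{X'}$ become two plane fields on the \emph{same} $X$ agreeing on $\partial X$; (2) invoke \fullref{lemma:pont} to translate "homotopic rel $\partial$" into "framed-cobordant interior framed links"; (3) show the two framed links have the same class in $\Ho_1(X,\Z)$, splitting this into the $\Ho_1(\M)$-contribution (unchanged because the ambient $\xi$ is unchanged) and the meridional contribution (fixed by the common boundary germ, hence by $\tb$); (4) fix a global trivialization of $\TM$, compute the residual $\Z/d$-valued obstruction via the Gompf invariant $\Gamma_\tau$, and show the discrepancy between the two is exactly $\sum_i(\rot(\Li_i)-\rot(\Li_i'))$ times the relevant meridional generator, which vanishes by hypothesis; (5) conclude the framed links are framed-cobordant, hence the plane fields are homotopic rel boundary.

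The main obstacle I expect is step~(4): carefully bookkeeping how the spin$^c$/$d_3$-style obstruction of $\xi$ on the closed manifold $\M$ decomposes when one excises the solid tori $\N(\Li_i)$, and verifying that the piece living on the boundary tori is computed precisely by $(\tb(\Li_i),\rot(\Li_i))$ with respect to the chosen Seifert surfaces --- in particular that no hidden dependence on the isotopy or on the Seifert surface choices survives once the hypotheses are imposed. This is where one must be most careful that "same classical invariants, computed with the fixed Seifert surfaces" is exactly the right hypothesis to kill the relative obstruction, rather than something slightly weaker or stronger. The homology-class step~(3) is routine Mayer–Vietoris, and steps~(1),(2),(5) are direct applications of results already available (the Legendrian neighborhood theorem and \fullref{lemma:pont}).
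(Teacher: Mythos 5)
Your overall skeleton matches the paper's: reduce to a single complement $X=\M\setminus\N(\Li)$ carrying two plane fields with a common boundary germ (using the Legendrian neighborhood theorem and $\tb(\Li_i)=\tb(\Li_i')$), then apply the relative Pontryagin--Thom correspondence of \fullref{lemma:pont} and compare the homology class and the framing of the associated framed links. But steps (3) and (4) misallocate the roles of the two classical invariants, and this is a genuine gap. The meridional coefficients of $[\Li_f]\in\Ho_1(X,\Z)$ are \emph{not} pinned down by the boundary germ (equivalently, by $\tb$): two extensions of the same germ can perfectly well produce framed links in different homology classes --- that is exactly why the map $\phi\colon\mathcal{F}\to\Ho_1(X,\Z)$ is a nontrivial surjection. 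The meridional coefficient is the intersection number $\Li_f\cap\Sigma_i$, which equals the relative Euler number $\langle e(\xi,\V_2),[\Sigma_i]\rangle$, and this is (up to the parity correction built into the choice of $\V_2$) the rotation number $\rot(\Li_i)$. So it is the hypothesis $\rot(\Li_i)=\rot(\Li_i')$ that forces the homology classes to agree in the meridional directions; if your step (3) were right, the lemma would hold with no assumption on $\rot$ at all, which is false. Correspondingly, step (4) puts $\rot$ in the wrong place: the residual framing ambiguity is not a $\rot$ computation. In the paper it is resolved by observing that $\xi$ and $\xi'$ are homotopic as plane fields on the closed manifold $\M$ (since $\xi'$ is the pushforward of $\xi$ by the ambient isotopy), so the framings can differ only by a multiple of the divisibility of $e(\xi)=2[\Li_\xi]$ --- which is precisely the indeterminacy $\Z/d(2\Li_\xi)$ already allowed in the relative classification. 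Note also that a framing discrepancy is an integer, so ``$\sum_i(\rot(\Li_i)-\rot(\Li_i'))$ times the meridional generator'' does not even typecheck as a framing correction.

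A second, smaller omission: ``fix a global trivialization of $\TM$'' is not sufficient to run these computations. The paper constructs a carefully adapted trivialization: $\V_1$ a Reeb field (so the boundary map misses the north pole and $\N$ can be collapsed to the south pole, which is what lets one compare the $\Ho_1(\M)$ parts), $\V_2$ tangent to the link components with an extra negative twist when $\rot(\Li_i)$ is odd (to make the relative Euler class an even class), and a half Lutz twist along a transverse knot Poincar\'e dual to half that class so that $\V_2$ actually extends over $X$. Without this preparation, the identification of the meridional coefficients with the rotation numbers does not go through, so your step (4)'s ``main obstacle'' is in fact the heart of the argument and is resolved differently than you anticipate.
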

 \begin{proof}
 We will use techniques similar to \cite{et}. % Observe that if the euler class is non-zero then we need to fix a Seifert surface for each of the link components.
 As $\Li$ and $\Li'$ are topologically isotopic, there is an ambient isotopy of $\M$ which takes $\Li$ to $\Li'$. We will assume that the Seifert surfaces of the link components are also related by this ambient isotopy (So after applying the ambient isotopy we assume the Seifert surfaces of the components agree).
 
 As $\Li$ and $\Li'$ are topologically isotopic there is an ambient isotopy of $\M$, $\phi_t$ such that $\phi_0=id$ and $\phi_1(\Li)=\Li'$. Using this isotopy we push forward the underlying contact structure $\xi$. Thus we now have a new contact structure ${\phi^{-1}_{1*}}{\xi}$ and call it $\xi'$. Observe $\xi$ and $\xi'$ are homotopic as plane fields in $\M$. After we apply the isotopy we can assume $\Li=\Li'$ and $\N$ be their standard neighborhood. Note that, $\tb$ measures the twisting of the contact framing with respect to the surface framing. As the components have the same $\tb$, this allows us to identify the neighborhoods. Now by standard neighborhood theorem of Legendrian links, $\xi$ and $\xi'$ agree on $\N$. We need to show that $\xi|_{\M\setminus\N}$ is homotopic to $\xi'|_{\M\setminus \N}$ rel boundary as plane fields. We know that homotopy class of plane fields are in one-to-one correspondence with  framed links up to framed cobordism. Now using Pontyragin-Thom construction for manifolds with boundary, we will associate these plane fields with $(\Li_\xi,f_\xi)$ and $(\Li_{\xi'},f_{\xi'})$. We need to show that these links are homologous in $\M\setminus \N$ and that their framing differs by $2d[L_\xi]$ where $d$ is the divisibility of the euler class of $\xi$.
 
 To do this, first we will fix a trivialization of $\TM$. Note that, Pontyragin--Thom construction works for any trivialization, but we would like to use a convenient one. Suppose $\V_1$ be the Reeb vector field of $\xi$. Now we choose a Riemannian metric such that $\V_1$ is positively orthogonal to $\xi$ with respect to this metric. Thus $\V_1$ defines $\xi$ in $\M$. To avoid ambiguity, from now on we will call the contact structure $\xi$,  $\xi_{\V_1}$ and start making alterations to $\xi_{\V_1}$ which do not affect $\xi$ or $\xi'$. Next choose $\V_2$ in the following way:
 \begin{enumerate}
 \item Choose $\V_2$ to be the tangent vector field along $\Li_i$ , if $\rot(\Li_i)$ is even.
 \item Choose $\V_2$ to be the tangent vector field along $\Li_i$ with an extra negative twist with respect to the fixed Seifert surface of the component, if $\rot(\Li_i)$ is odd.
 \end{enumerate}
 Observe that the tangent vector field $\V_2$ along $\Li=\Li'$ agrees as all the components have same $\rot$ ($\rot$ measures the winding of the tangent vector field along the component)
 Notice that as we know $\xi$ in $N$, we can extend $\V_2$ to all of $N$. Now we need to extend $\V_2$ to all of $\M$. In general, this might not be possible. The relative Euler class $e(\xi_{V_1}, \V_2)$ is the obstruction to this extension. So our goal is to make this obstruction vanish. 
 
 By using Lefchetz duality and Mayer--Vietoris sequence, we have  
 \begin{equation*}
 \label{eq:hom}
  \Ho^2(\X,\partial \X;\Z)\simeq \Ho_1(\X;\Z)\simeq \Ho_1(\M)\oplus \Z^n
  \tag{1}
  \end{equation*}
 where each of the $\Z$ factors are generated by the meridian of the link components.
 Now the relative Euler class $e(\xi_{\V_1},\V_2 )$ lives in $\Ho^2(X,\partial \X;\Z)$. By \fullref{eq:hom}, it has $n+1$ components. As the splitting suggests one can check that the relative Euler class of $\xi_{\V_1}$ relative to $\V_2$ on $\partial \X$ is computed as its evaluation on absolute chains in $\X\subset \M$  and its evaluation on the Seifert surfaces of $L_i$. For the first part, the evaluation is determined by the evaluation of $e(\xi_{V_1})$ on surfaces in $\M$. Now as $\xi_{\V_1}$ is a contact structure, it is an even class. On the other hand, by our choice of $\V_2$, 
 \[ \langle e(\xi_{\V_2},),[\Sigma_i]\rangle= \rot(\Li_i) \quad \text{or}\quad \rot(\Li_i)+1\]
 In both the cases, this is always even for each $i$. So the relative Euler class is a $n+1$ vector with every co-ordinate even. Let us rename this as $\alpha$. Next we will apply half Lutz twist to alter the relative Euler class.  Now choose a transverse knot $\K$ in $\X$ (that is  $[\K] \in \Ho_1(\X,\Z)$) such that $PD[\K]=\frac{1}{2}(\alpha)$ (We can always find such knot). If we apply half Lutz twist in $\X$ along $\K$, we get a new contact structure $\xi_{\V_2'}$ such that\[
 e(\xi'_{\V_1},\V_2)- e(\xi_{\V_1},\V_2)=-2PD[\K] \]
 By our choice of $\K$, $ e(\xi'_{\V_1'},\V_2)$ becomes zero. Thus we can extend $\V_2$ as a section of $\xi'_{\V_1}$ on all of $\X$. Now choose an almost contact structure $J$ on $\M$ and say $\V_3=J\V_2$. We use the vector fields $-\V_1, \V_2, \V_3$ to trivialize $\TM$ and $\mathrm{TX}$. Notice here $\V_1$ is mapped to the south pole $p^*$. We will call this trivialization $\tau$.

Using this trivialization, we find framed links ($\Li_\xi, f_\xi)$ and ($\Li_{\xi'}, f_{\xi'})$ associated to $\xi$ and $\xi'$ by Pontyragin--Thom construction on $\X$. As $\M$ is trivialized by $\tau$, both $\Li_\xi$ and $\Li_{\xi'}$ are oriented cycles. Next we need to show that $\Li_\xi$ and $\Li_{\xi'}$ are homologous in $\X$. As $\Ho_1(\X,\Z)$ splits in $n+1$ components,we need to check if they agree in each of them. First we will show they agree in $\Ho_1(\M,\Z)$. Now notice, $\V_1$ is the vector field that defines $\xi$ in $\N$ and also it is mapped to the south pole. So we can define a map from $\N$ to $\Sp^2$ where $\N$ is collapsed to the south pole $p^*$. Now we can extend the map $f_\xi$ in the following way:
\[F_\xi(x)=
\begin{cases}
             f_\xi(x)\quad \text{if}\quad x\in X\\
             
             p^*      \qquad \ \text{if}\quad x\in \N
 \end{cases}
 \]            
Now $F^{-1}(p)=f^{-1}(p)=\Li_\xi$. Similarly for $\Li_{\xi'}$. Thus $\Li_\xi$ and $\Li_{\xi'}$ are also associated to $\xi$ and $\xi'$ in $\M$. Now as $\xi$ and $\xi'$ are homotopic as plane fields in $\M$, the components must agree in $\Ho_1(\M,\Z)$.

Next we need to verify if $\Li_\xi\cap\Sigma_i= \Li_\xi'\cap\Sigma_i$ for each $i$. Note that here we can take the same Seifert surfaces for  each link components $\Li_i$ and $\Li_i'$ as they are related by the ambient isotopy. As the tangent vector $\V_2$ gives the framing to the link $\Li_\xi$ (as framing of $\Li_\xi$ is given by  the pull back of $T_p\Sp^2$ and this is exactly equal to $\xi$ along $\Li_\xi$), we have
\[ \langle e(\xi,\V_2), \Sigma_i\rangle=\Li_\xi\cap\Sigma_i.\] 
Same argument works for $\Li_{\xi'}$.
 Now if $\rot(\Li_i)$ is even, the definition of $\V_2$ gives us $\rot(\Li_i)=\langle e(\xi,\V_2),\Sigma_i\rangle$. 
Thus if $\rot(\Li_i)$ is even, we have, 
\[ \Li_\xi\cap\Sigma_i=\langle e(\xi,\V_2),[\Sigma]\rangle=\rot(\Li_i)=\rot(\Li_i')=\langle e(\xi',\V_2),[\Sigma]\rangle=\Li_{\xi'}\cap\Sigma_i\]
Similarly for $\rot(\Li_j)$ odd, 
\[ \Li_\xi\cap\Sigma_i=\langle e(\xi,\V_2),[\Sigma]\rangle=\rot(\Li_j)+1=\rot(\Li_j')+1=\Li_\xi'\cap\Sigma_i.\] Thus $\Li_\xi$ and $\Li_{\xi'}$ are homologous in $\Ho_1(X,\Z).$

Next we want to show that the framing differs by $2d([\Li_\xi])$. Now notice that $\xi$ and $\xi'$ are homotopic as plane fields in $\M$. Thus the framings of $\Li_\xi$ and $\Li_\xi'$ associated to $\xi$ and $\xi'$ must differ by $d(\xi)$ where $d(\xi)$ is the divisibility of $e(\xi$) \cite{gompf}. In other words, its the same as the divisibility of the Poincar\'{e} dual of $e(\xi)$.  We will show this is exactly $2d[L_\xi]$.
We know $\xi=f_\xi^*(TS^2)$.
\[e(\xi)=e(f_\xi^*(TS^2))= f_\xi^*(e(TS^2))= f_\xi^*(2[S^2]).\] Now $p=PD[S^2]$ as $p$ is a regular value. So \[f_\xi^*(2[S^2])= f_\xi^*(2PD[p])=2PD(f_\xi^{-1}(p))=2[\Li_\xi].\] For the second equality check \cite{geiges}. So the framing differs by $2d[L\xi]$.
Thus by \fullref{lemma:pont}, $\xi_{M\setminus N}$ and $\xi_{M\setminus N}'$ are homotopic rel boundary.
 \end{proof}
 \begin{proof}[Proof of \fullref{thm:main}]
 As $\Li$ and $\Li'$ are loose, they have overtwisted complements. Now by Eliashberg's classification of overtwisted contact structures we know that isotopy classes of overtwisted contact structures are in one to one correspondence with the homotopy class of plane fields \cite{eli}. Thus if each of the components of $L$ and $L'$ have same Thurston--Benniquin and rotation number, by \fullref{lemma:homotopic} they have contactomorphic complements rel boundary. As we can extend this contactomorphism  over the standard  neighborhood of $\Li$ (disjoint union of solid tori), this proves $\Li$ and $\Li'$ are coarsely equaivalent.
 \end{proof}
 \begin{corollary}
 \label{cor:transverselink}
 Suppose $\T$ and $\T'$ are two topologically isotopic loose $n$-component transverse links with each of their components being null-homologous (i.e each of the components bounds a Seifert surface). Fix these Seifert surfaces and with respect to these surfaces suppose $\slk(\T_i)=\slk(\T'_i)$,  then $\T$ and $\T'$ are coarsely equivalent.
 \end{corollary}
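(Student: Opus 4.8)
The plan is to deduce this from the Legendrian case, \fullref{thm:main}, using Legendrian approximation and transverse push-off. Given the loose transverse links $\T$ and $\T'$ together with their fixed Seifert surfaces, I would first choose Legendrian approximations $\Li$ of $\T$ and $\Li'$ of $\T'$ inside arbitrarily small tubular neighborhoods of $\T$ and $\T'$. Such approximations are topologically isotopic to $\T$ and $\T'$ respectively, hence to each other as links, and each component bounds an isotoped copy of the corresponding fixed Seifert surface, so the approximations are null-homologous. They are also loose: an overtwisted disk in $\M\setminus\T$ survives in $\M\setminus\N(\T)$ for a small enough neighborhood $\N(\T)$, hence in $\M\setminus\Li$, and similarly for $\Li'$. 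Finally, since the positive transverse push-off of $\Li_i$ is transversely isotopic to $\T_i$, the formula $\slk(\Li_+)=\tb(\Li)-\rot(\Li)$ (applied componentwise, with respect to the fixed surfaces) gives $\tb(\Li_i)-\rot(\Li_i)=\slk(\T_i)$, and likewise $\tb(\Li'_i)-\rot(\Li'_i)=\slk(\T'_i)=\slk(\T_i)$ for every $i$.

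Next I would make the classical invariants of $\Li$ and $\Li'$ agree componentwise by negative stabilizations. A negative stabilization decreases $\tb$ by $1$ and $\rot$ by $1$, so it preserves the quantity $\tb-\rot$ and, crucially, it does not change the positive transverse push-off up to transverse isotopy. Since $\tb(\Li_i)-\rot(\Li_i)=\tb(\Li'_i)-\rot(\Li'_i)$ for every $i$, I would negatively stabilize whichever of $\Li_i,\Li'_i$ has the larger Thurston--Bennequin invariant the appropriate number of times; the resulting links (still denoted $\Li$, $\Li'$) then satisfy $\tb(\Li_i)=\tb(\Li'_i)$, and hence $\rot(\Li_i)=\rot(\Li'_i)$, for all $i$. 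They remain loose, null-homologous, and topologically isotopic to $\T\simeq\T'$.

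Now \fullref{thm:main} applies and yields an orientation-preserving contactomorphism $\psi$ of $(\M,\xi)$, smoothly isotopic to the identity, with $\psi(\Li)=\Li'$. Because $\psi$ is a contactomorphism it carries the positive transverse push-off of $\Li$ to that of $\Li'$, up to transverse isotopy; by the choice of approximations together with the stabilization invariance noted above, these push-offs are transversely isotopic to $\T$ and $\T'$ respectively. Realizing the two transverse isotopies by ambient contact isotopies of $(\M,\xi)$ and composing them with $\psi$ produces the desired orientation-preserving contactomorphism, isotopic to the identity, taking $\T$ to $\T'$.

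The only genuine difficulty is bookkeeping: one must track orientation conventions and the signs in $\slk(\Li_\pm)=\tb(\Li)\mp\rot(\Li)$ componentwise, and verify carefully that the positive transverse push-off of a negative stabilization of a Legendrian approximation is transversely isotopic to the original transverse link, so that the chain ``$\T\to$ Legendrian approximation $\to$ negative stabilizations $\to$ positive transverse push-off'' closes up back to $\T$. All of the substantive contact-geometric content is already contained in \fullref{thm:main} and \fullref{lemma:homotopic}.
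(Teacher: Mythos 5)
Your proposal is correct and follows essentially the same route as the paper: Legendrian approximate both transverse links in small neighborhoods so they stay loose, use negative stabilizations (which preserve $\tb-\rot$ and the positive transverse push-off) to match the classical invariants, apply \fullref{thm:main}, and then recover $\T$ and $\T'$ via the well-defined transverse push-off. Your write-up is in fact slightly more careful than the paper's at the final step, where you explicitly close the loop by composing with transverse isotopies.
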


\begin{proof}
Suppose $\T$ and $\T'$ be two loose transverse links with each of their components being nullhomologous (i.e each component bounds a Seifert surface) and $\slk(\T_i)=\slk(\T_i')$ for each $i$. Now we Legendrian realize $\T$ and $\T'$ component by component and call them $\Li$ and $\Li'$.  We can do the Legendrian approximation in a small enough neighborhood so that the Legendrian links remain loose. After this step, 
 we can have the following two cases:

\subsubsection*{\bf Case 1}Suppose $\tb(\Li_i)=\tb(\Li_i')$ and $\rot(\Li_i)=\rot(\Li_i')$ for all $i$. Then we have two loose Legendrian links with each component null homologous and with same classical invariants. Thus by \fullref{thm:main}, they have contactomorphic complements. Now we take the transverse push-off of $\Li$ and $\Li'$. As transverse push-off is well-defined, we get back $\T$ and $\T'$. This proves that $\T$ and $\T'$ are coarsely equaivalent.
\subsubsection*{\bf Case 2}Suppose $\tb(\Li_j)\neq\tb(\Li_j')$ for some $j$. We may assume $\tb(\Li_j)>\tb(\Li_j')$. So we start by negatively stabilizing $\Li_j$. As we can do a negative stabilization in a small enough Darboux ball, this does not effect any other link component and thus without changing the transverse link type. So we can negatively stabilize each of the link components locally one by one till $\tb(\Li_i)=\tb(\Li_i')$ for each $i$. As $\slk(\T_i)=\slk(\T_i')$, we must have $\rot(\Li_i)=\rot(\Li_i')$ for each $i$ as well. So we are back in case 1.
\end{proof}
 
 \section{Links and open book decomposition}
 \label{sec:openbook}
 In this section, we extend the idea of support genus of a Legendrian knot \cite{ona} to the support genus of a link and prove that every coarse equivalence class of loose null-homologous Legendrian links have support genus zero.
 
  We can always  associate a Legendrian link in $(\M,\xi)$ with an open book supporting the underlying manifold by including the link in the $1$-skeleton of the contact cell decomposition of the contact manifold. Thus we define the support genus of a Legendrian link in $(\M,\xi)$ as follows: 
 
 \begin{definition}
 The support genus $\sg(\Li)$ of a Legendrian link $\Li$ in a contact $3$-manifold $(\M,\xi)$ is the minimal genus of a page of the open book decomposition of $\M$ supporting $\xi$ such that $\Li$ lies on the page of the open book and the framings given by $\xi$ and the page agree.
 \end{definition}

  In \cite{ona}, Onaran proved the following theorem.
 \begin{theorem}
 Any link in a 3-manifold $\M$ is planar.
 \end{theorem}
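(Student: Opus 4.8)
The statement should be read as: every link $\Li$ in a closed oriented $3$-manifold $\M$ can be isotoped onto a genus-zero page of some open book decomposition of $\M$ (so it is \emph{planar}). The plan is to reduce the whole question to $\Sp^3$, solve it there, and then transport the solution across a Dehn surgery.

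First I would set up the surgery reduction. Present $(\M,\Li)$ as surgery on $\Sp^3$: choose any surgery presentation of $\M$ along a framed link $\K_0\subset\Sp^3$, and after a generic isotopy push $\Li$ off the surgery solid tori so that it sits in $\Sp^3\smallsetminus \K_0$ as a link $\Li_0$; then the surgery on $\K_0$ carries $(\Sp^3,\Li_0)$ to $(\M,\Li)$. Rewriting each integer coefficient by a sequence of blow-ups (slam-dunks), I may further assume the surgery link $\K$ is disjoint from $\Li_0$ and that \emph{every} component of $\K$ is a $(\pm 1)$-framed knot.

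Next comes the heart of the argument: realize $\K\cup\Li_0$ on a planar page of an open book of $\Sp^3$, with each component of $\K$ carrying page framing $0$. Start from the trivial open book of $\Sp^3$ (disk page, trivial monodromy) and perform a sequence of positive Hopf stabilizations along trivially embedded arcs; each such stabilization attaches a one-handle to the page in a way that increases the number of boundary components, hence does not raise the genus, so the page stays planar while the ambient manifold stays $\Sp^3$. Using Alexander's theorem to braid $\K\cup\Li_0$, together with the flexibility of Hopf plumbings, one arranges that after enough stabilizations the entire link lies on the (still planar) page, and a few further stabilizations localized near the components of $\K$ correct each of their page framings to $0$ relative to the Seifert framing. \textbf{This step is the one I expect to be the main obstacle}: one must embed an arbitrary link in the interior of a planar page --- not merely on its own (possibly higher-genus) fiber surface --- and at the same time control the page framings along $\K$; this requires the full strength of the Hopf-plumbing machinery rather than a naive diagrammatic surface.

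Finally I would convert the surgeries into monodromy. If $c$ is an embedded curve on a page with page framing $0$, then $(-1)$-surgery on $c$ produces the open book with the \emph{same} (planar) page whose monodromy is post-composed with the right-handed Dehn twist $D_c$, and $(+1)$-surgery produces the one post-composed with $D_c^{-1}$; in either case the page is unchanged. Applying this to each component of $\K$ in turn yields an open book decomposition of $\M$ with planar page that still contains $\Li_0=\Li$ in its interior. Therefore $\Li$ is planar, and in particular $\sg(\Li)$ is defined by a genus-zero page whenever a compatible contact structure is present. (The same argument, applied componentwise, recovers the knot case treated in the cited reference.)
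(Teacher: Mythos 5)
Your argument is correct and follows essentially the same route as the proof this paper defers to (Onaran \cite{ona}): present $\M$ as $\pm 1$-surgery on a link in $\Sp^3$, braid the surgery link together with $\Li$ about an axis, stabilize the disk open book along boundary-parallel arcs so the page stays planar while absorbing the strands and adjusting page framings, and then realize the surgeries as Dehn twists in the monodromy. The one step you flag --- getting the braided link onto the stabilized planar page with controlled framings --- is exactly the content of the lemma proved in \cite{ona}, so nothing essential is missing.
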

 The above theorem tells us that that any link in $\M$ can be put on a planar open book $(\B,\Sigma, \phi)$ for $\M$. For details of the proof see \cite{ona}. 
 \begin{figure}[!htbp]
 \centering
 \includegraphics[scale=0.1]{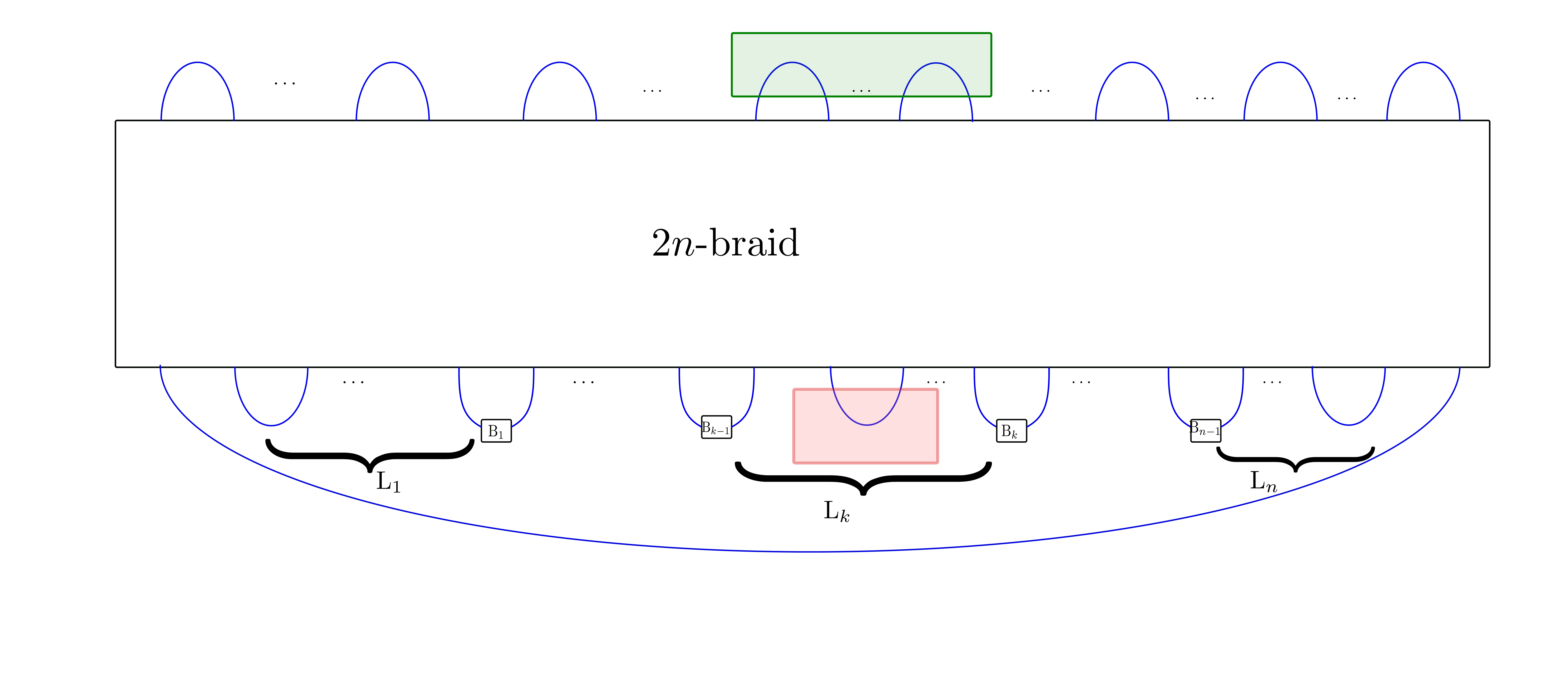}
 \caption{Page of a planar open book where the link lies. The blue outline shows the outer boundary component of the punctured disk. The box depicts the boundary area where we want to do the stabilization or detabilization of $\Li_k$.}
 \label{fig:braid}
 \end{figure}
 Now before we proceed to the main theorem of this section, we will need the following lemmas.
  \begin{lemma}
\label{lemma:Positive_stab_Leg}
Suppose $\Li$ be a Legendrian link sitting on a planar open book as shown in \fullref{fig:braid}. Then positive/negative stabilization of any of the link component $\Li_i$ can be done fixing the Legendrian isotopy type of the other link components.
\end{lemma}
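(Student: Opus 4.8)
\emph{Overview.} The plan is to realize the stabilization of the component $\Li_k$ entirely inside a small ball sitting in the boundary region of the page (the box of \fullref{fig:braid}), into which no other component of $\Li$ enters: there one positively stabilizes the open book and reroutes the arc of $\Li_k$ over the new $1$-handle, producing the positive, resp.\ negative, stabilization $S_+(\Li_k)$, resp.\ $S_-(\Li_k)$, while leaving every other component untouched. The first thing I would do is set up this local picture. In \fullref{fig:braid} each $\Li_k$ already has an arc running into the boundary region marked by the box; after shrinking the box we may assume that this arc of $\Li_k$ is disjoint from every $\Li_i$ with $i\ne k$. Fix a ball $U$ containing a collar of the relevant arc of the outer boundary component of the page together with this sub-arc of $\Li_k$, and put the page and binding in their standard local form on $U$ (so that on $U$ the page is a model slice, the binding the model boundary circle, and the arc of $\Li_k$ a Legendrian arc whose page framing agrees with its contact framing, as in the definition of $\sg$).

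Next I would stabilize the open book and reroute the arc. Attach a $1$-handle to the page along two points of the outer-boundary arc lying in $U$, and set the new monodromy to be $\phi$ composed with a right-handed Dehn twist $D_c^{+}$ along a curve $c$ meeting the cocore once. Since the handle and $c$ lie in $U$, the new open book $(\B',\Sigma',\phi')$ agrees with $(\B,\Sigma,\phi)$ outside $U$; a positive stabilization of a planar open book is again a planar open book, and by Giroux it still supports $\xi$. Now replace the sub-arc of $\Li_k$ by an arc running over the new $1$-handle: reading off the Lagrangian (front) projection in the model, the two inequivalent ways of taking the arc over the handle insert, respectively, a single down zig-zag and a single up zig-zag, hence yield $S_-(\Li_k)$ and $S_+(\Li_k)$, each still lying on a page of $(\B',\Sigma',\phi')$ with its page framing equal to its contact framing. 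Because the entire construction — the choice of $U$, the handle, the Dehn twist, and the reroute — is supported inside $U$, each $\Li_i$ with $i\ne k$ is literally unchanged, so a fortiori its Legendrian isotopy type is preserved (and the same modification, done simultaneously in disjoint boxes, handles several components).

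The step I expect to be the main obstacle is the bookkeeping in the local model: one must check that rerouting over the stabilizing $1$-handle is exactly one Legendrian stabilization of the stated sign — not merely a Legendrian isotopy, nor a double stabilization — and that, after composing the monodromy with $D_c^{+}$, the page framing of the rerouted arc still coincides with its contact framing. I would pin this down by working in the explicit positive Hopf-band model (cf.\ \cite{ona}), where the zig-zag and both framings can be drawn directly; concretely this amounts to comparing the page framing before and after the reroute against the twisting contributed by $D_c^{+}$. The remaining ingredients — that \fullref{fig:braid} indeed lets one isolate the boundary arc of $\Li_k$ disjointly from the other strands, and that planarity survives a positive stabilization of the open book — are routine given the figure and the description of open-book stabilization recalled in \fullref{sec:basics}.
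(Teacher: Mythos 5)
Your proposal is correct and takes essentially the same approach as the paper: stabilize the open book positively along the outer binding component in a region near $\Li_i$ and away from all other components, push $\Li_i$ over the new $1$-handle, identify the result as a single stabilization of the appropriate sign, and conclude by locality that the other components are untouched. The only difference is in how the key local verification is certified: where you propose an explicit front-projection computation in the Hopf-band model, the paper exhibits the disk $\D$ cobounded by the old and rerouted arcs, notes it has $\tb(\partial\D)=-1$ with a single dividing curve so it is a convex stabilizing (bypass) disk, and reads off the sign of the stabilization from the singularities of its characteristic foliation along $\partial\D$.
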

\begin{figure}[!htbp]
\centering
\includegraphics[scale=0.2]{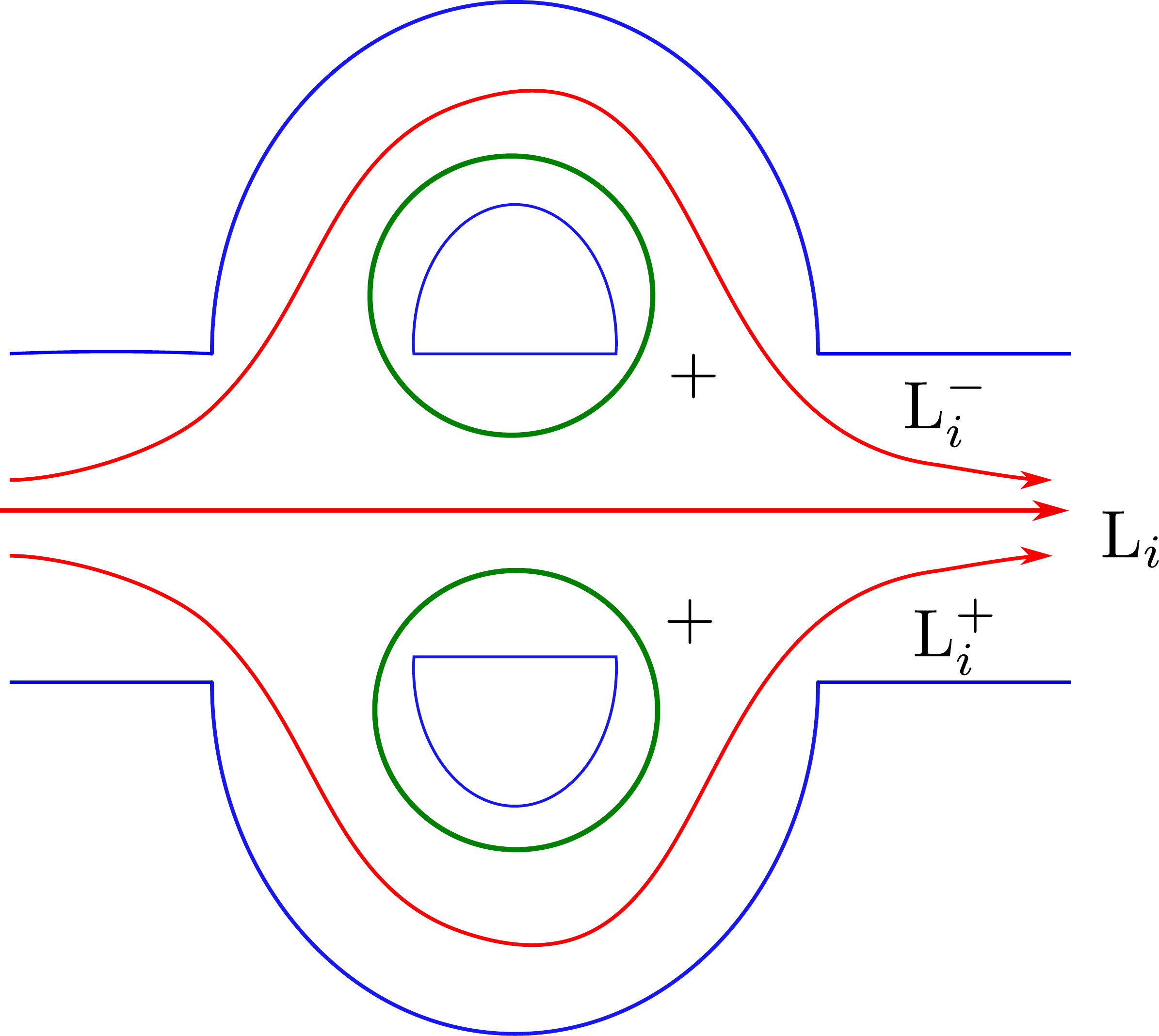}
\caption{Positive and Negative stabilization of the link sitting on the page of an open book.}
\label{fig:Positive_stab_Leg}
\end{figure}
\begin{proof}

 Suppose $\Li$ be a Legendrian link sitting on the page of a planar open book. Fix an orientation of the link. Suppose $\B_i$ is the outer most binding component. Now choose a particular region of $\B_i$ which is closest to $\Li_i$ and far from other components. The shaded region in \fullref{fig:braid} shows us where we will do the stabilizations. We do a positive stabilization along $\B_i$ and push the link component $\Li_i$ along the attaching 1-handle as shown in \fullref{fig:Positive_stab_Leg}. We call it $\Li'_i$.  by our choice of attaching region, this operation is local and thus does not affect any other link component sitting on the page of the open book. Clearly $\D$ is a disk with $\tb=-1$ and a single dividing curve. Thus we assume it to be convex. Therefore, $\Li'_i$ is the stabilization with $\D$ being the stabilizing disk. Also $\D$ can be thought as bypass disk along $\Li'$. 
 \begin{figure}[!htbp]
 \centering
 \includegraphics[scale=0.25]{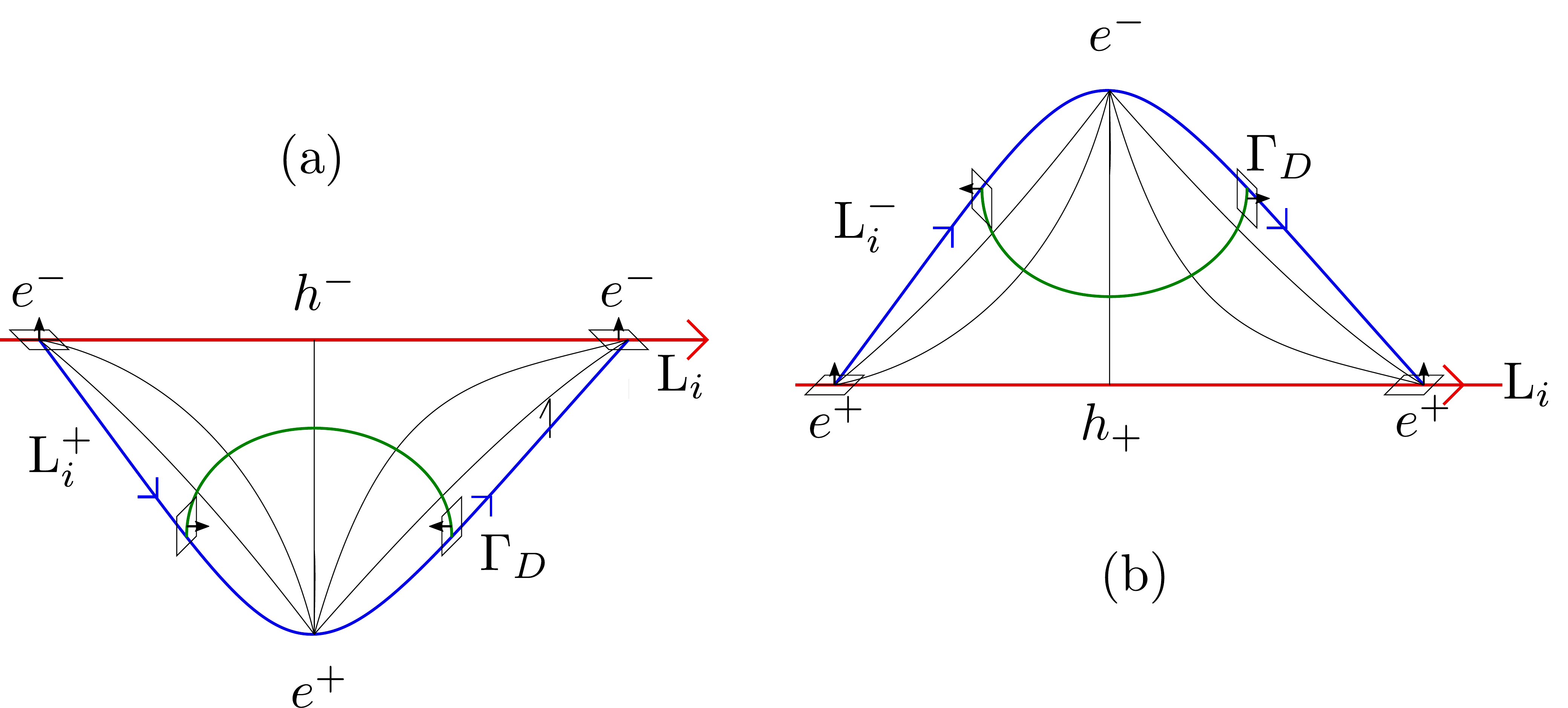}
 \caption{The positive and negative stabilization of $\Li_i$ and the signs of bypass disks.}
 \label{fig:bypass_sign}
 \end{figure}
 The sign of the stabilization will depend on the orientation of the boundary of the disk.. The orientation of the boundary of the disk is inherited by the Legendrian knot $\Li'$. The sign of the singularity of $D_\xi$ is determined by the contact planes. We will call a singularity along $\partial\D$ positive or negative according to if the contact plane takes a right handed or a left handed turn along $\partial\D$. See \fullref{fig:bypass_sign}. Now clearly we have chosen to do this operation away from the other link components. Thus all other link components remain unaltered during the operation and so are their Legendrian knot types. Observe that, $\Li_i$ has a fixed orientation. So we can perform any number of positive or negative stabilization of any link component away from the other components.
 
 \end{proof}

The next lemma tells us that de-stabilization of any component of a loose link can be done in the complement of other components.
 \begin{lemma}
 \label{lemma:neg_stab}
 Suppose $\Li$ be a link sitting on the page of a planar open book $(\B, \Sigma,\phi)$ as shown in \fullref{fig:braid}. Suppose $\B_i$ be the outer most boundary component. Now suppose we do a negative stabilization of $(\B,\Sigma,\phi)$ along $\B_i$. The new open book does not support $(\M,\xi)$ and we get a new link  $\Li_{new}$ in the new contact structure. Now if we push $\Li_{new}$ along the attaching handle, this will destabilize the link component and it can be performed in a way that it does not affect the Legendrian type of any other link components. 
\end{lemma}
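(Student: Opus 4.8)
The plan is to run the argument of \fullref{lemma:Positive_stab_Leg} almost verbatim, replacing the right-handed Dehn twist in the monodromy by a left-handed one, and then to isolate the single place where this change matters.

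First I fix an orientation of $\Li$ and, exactly as in the proof of \fullref{lemma:Positive_stab_Leg}, I select a subarc of the outermost binding component $\B_i$ that is closest to $\Li_i$ and disjoint from a neighborhood of every other component; all modifications below are supported in a collar of this arc (which I may take to be a Darboux ball), so the other components of $\Li$ are never touched, neither smoothly nor as Legendrians. Then I attach a $1$-handle to $\Sigma$ at two points of this arc to get $\Sigma'$, choose a simple closed curve $c\subset\Sigma'$ meeting the cocore of the handle once, and set $\phi'=\phi\circ D_c^-$. Stabilization of either sign leaves the underlying $3$-manifold unchanged, so $(\B',\Sigma',\phi')$ is still an open book for $\M$; but by Giroux's theorem only positive stabilizations preserve the supported contact structure, so $(\B',\Sigma',\phi')$ supports a contact structure $\xi'$ that is in general not isotopic to $\xi$ — this is exactly the assertion that the new open book does not support $(\M,\xi)$. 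I write $\Li_{new}$ for $\Li$ regarded as a Legendrian link on the page $\Sigma'\supset\Sigma$ of this new open book; its components other than $\Li_i$ literally still lie on $\Sigma\subset\Sigma'$, with page framing equal to contact framing.

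Next I isotope $\Li_i$ across the attaching $1$-handle so that it runs once over the handle, meeting $c$ minimally, and call the result $\Li_i'$. As in \fullref{lemma:Positive_stab_Leg} this is a local move, $\Li_i'$ still lies on a page, and its page framing still agrees with the contact framing. The cocore disk $\D$ of the handle is a convex disk with $\tb(\partial\D)=-1$ and a single dividing curve, and $\Li_i$ is recovered from $\Li_i'$ by a bypass attachment along $\D$. The one difference with \fullref{lemma:Positive_stab_Leg} is that the handedness of $D_c^-$ reverses the sense in which the contact planes turn along $\partial\D$ relative to its boundary orientation (compare \fullref{fig:bypass_sign}); hence this bypass attachment \emph{removes} a zig-zag from $\Li_i'$ instead of adding one, so $\Li_i'$ is a destabilization of $\Li_i$, whose sign is read off, as before, from the orientation of $\partial\D$ and the turning of the contact planes along it. Combined with the locality noted above, this is the lemma.

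The only genuinely nontrivial step is the sign bookkeeping just described: one must check carefully that swapping $D_c^+$ for $D_c^-$ converts the stabilizing disk of \fullref{lemma:Positive_stab_Leg} into a \emph{de}stabilizing disk, i.e. that the left-handed monodromy twist together with the convex cocore $\D$ exhibits $\Li_i$ as a stabilization of $\Li_i'$ rather than the reverse. Everything else — invariance of the $3$-manifold, the appearance of a new supported contact structure, locality of the finger move, preservation of the page framing, and the fact that the other components are untouched — is identical to, or an immediate consequence of, the proof of \fullref{lemma:Positive_stab_Leg}.
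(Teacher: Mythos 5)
Your argument breaks down exactly at the step you identify as the ``only genuinely nontrivial'' one, and the sign bookkeeping does not come out the way you claim. After a \emph{negative} stabilization the cocore disk $\D$ of the new handle has $\tb(\partial\D)=+1$, not $-1$: the left-handed Dehn twist $D_c^-$ shifts the page framing of $\partial\D$ in the opposite direction from $D_c^+$. A disk with Legendrian boundary of positive Thurston--Bennequin invariant cannot be made convex, so $\D$ is not a convex bypass disk and there is no ``destabilizing disk'' to read a sign off of. The paper states this obstruction explicitly (``Here the disk $\D$ has $\tb=1$ and thus cannot be made convex''), and it is precisely why the proof of \fullref{lemma:Positive_stab_Leg} cannot be run verbatim with the handedness reversed. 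Your proposal as written asserts the disk is convex with $\tb=-1$, which is false, so the claim that $\Li_i'$ destabilizes $\Li_i$ is unproved.

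What the paper does instead is indirect. After the negative stabilization it performs an additional \emph{positive} stabilization along the same boundary region; the pair of stabilizations is a Murasugi sum with $(\Ho^+,\pi^+)\connsum(\Ho^-,\pi^-)$, which is an open book for the overtwisted $(\Sp^3,\xi_{-1})$, so the whole local modification takes place inside an overtwisted $\Sp^3$ summand disjoint from the other components. Pushing the component over the new handle produces, by \fullref{lemma:Positive_stab_Leg}, a genuine stabilization $(\Li'_{new})_i^\pm$ of the candidate destabilization $(\Li'_{new})_i$, and one now has an overtwisted disk disjoint from both $(\Li_{new})_i$ and $(\Li'_{new})_i^\pm$. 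Dymara's theorem \cite{dy} (Legendrian knots with equal classical invariants and a common disjoint overtwisted disk are Legendrian isotopic) then identifies $(\Li_{new})_i$ with $(\Li'_{new})_i^\pm$, exhibiting $(\Li'_{new})_i$ as a destabilization of $(\Li_{new})_i$. Note that this route uses looseness in an essential way, whereas your proposed argument, if it worked, would not --- a further sign that the direct bypass approach cannot be correct as stated. To repair your proof you would need to supply this (or some equivalent) detour around the non-convexity of $\D$.
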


 \begin{figure}[!htbp]
 \includegraphics[scale=0.2]{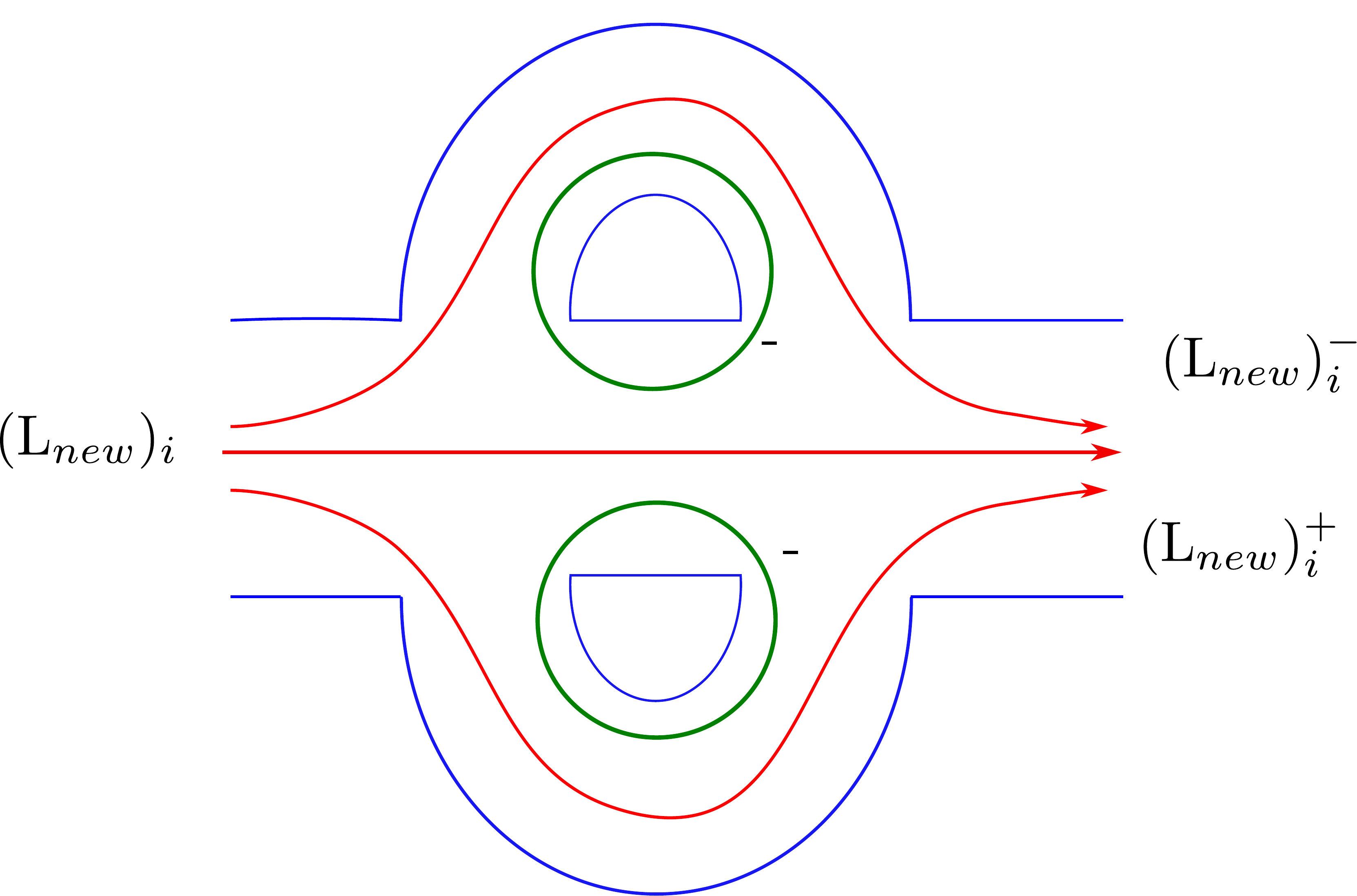}
 \caption{Negative stabilization of the open book and the de-stabilized link component sitting on the page}
 \label{fig:neg_stab_openbook}
 \end{figure}
 
 \begin{proof}
 In \cite{ona}, a similar version of this lemma has been proved for knots. We give a slightly different proof. Our proof relies on the fact that null-homologous Legendrian knots having same classical invariants are Legendrian isotopic in $\Sp^3$ if there is an overtwisted disk disjoint from them \cite{dy}.
 %We will mimic the proof of \fullref{lemma:openbook} here. The only difference is we need to be careful about where to do the stabilizations so that we do not change the Legendrian type of other link components.

 Suppose $\Li$ be a Legendrian link sitting on the page of a planar open book $(\B,\Sigma,\phi)$. Fix an orientation of $\Li$. Pick a link component $\Li_i$, we want to destabilize. Now we choose a particular region of the outer most boundary component near $\Li_i$ and away from all other $\Li_j$'s. This can be done as shown in \fullref{fig:braid}. 
 
  Now do a negative stabilization along that region and push the link component $\Li_i$ along the attaching 1-handle. By our choice of attaching region, this operation is away from the other link components. The new open book $(\B',\Sigma', \phi')$ doesn't support the underlying contact structure anymore. We will call the link $\Li_{new}$ in the new contact structure and show that $(\Li'_{new})_i$ is a destabilization of $(\Li_{new})_i$ as shown in \fullref{fig:neg_stab_openbook}.
 \begin{figure}[!htbp]
 \centering
 \includegraphics[scale=0.15]{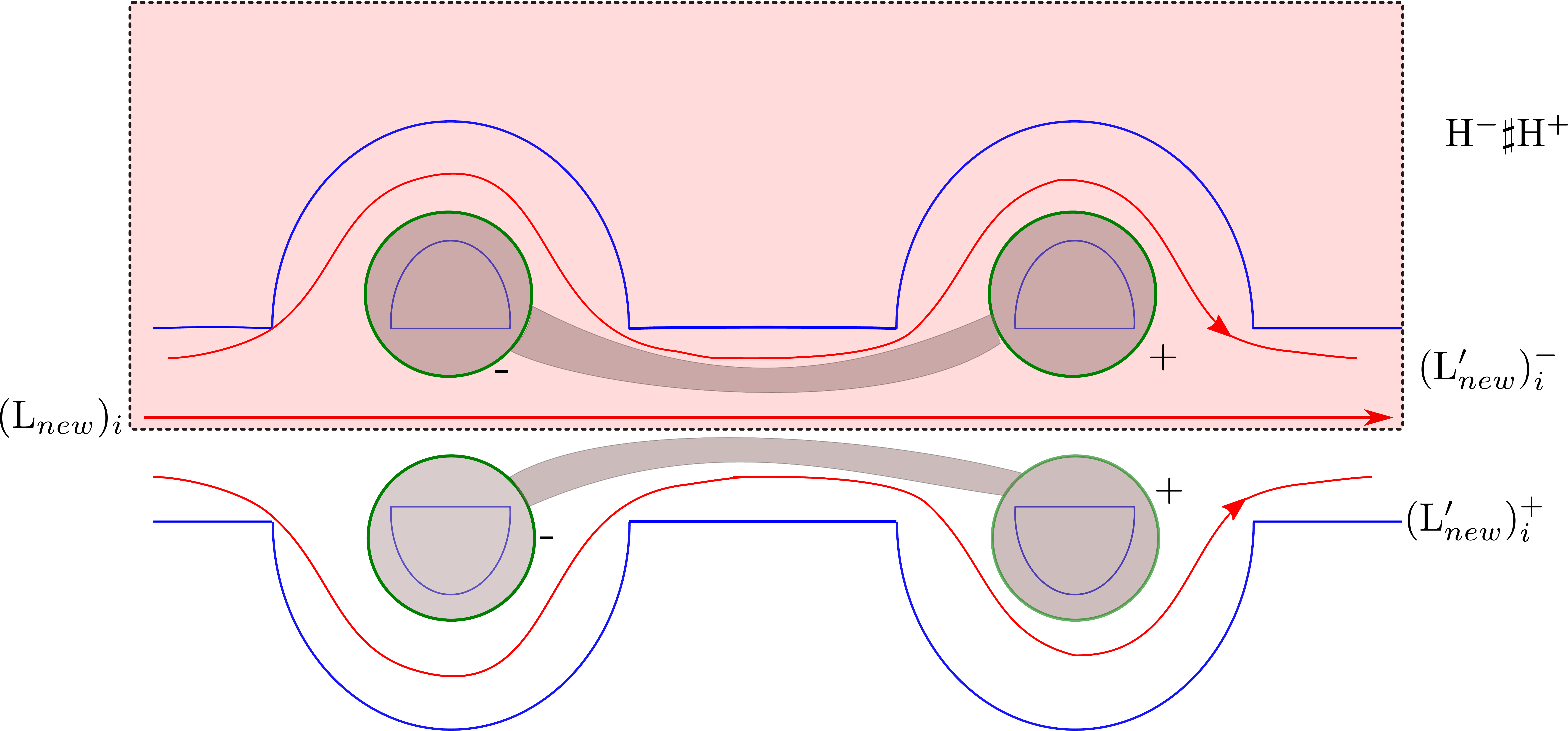}
 \caption{Negative stabilization followed by a positive stabilization of the open book near $\Li_k$ and away from other components.}
 \label{fig:destab_link}
 \end{figure}
 Here the disk $\D$ has $\tb=1$ and thus cannot be made convex. So we stabilize the open book along the same boundary component as shown in \fullref{fig:destab_link}. Now positive and negative stabilization of $(\Sigma,\phi)$ can also be thought as Murasugi summing with $(\Ho^\pm,\pi^\pm)$. Also notice $(\Ho^+,\pi^+)\connsum (\Ho^-,\pi^-)$ is an open book for $(\Sp^3,\xi_{-1})$. As the link components are identical outside the neighborhood of the boundary, we can assume the local operation to be entirely in the overtwisted $\Sp^3$. Now we push $(\Li'_{new})_i$ along the new attaching handle. And by \fullref{lemma:Positive_stab_Leg}, we get $(\Li'_{new})_i^\pm$ according to the orientation of the link component. Also  we found an overtwisted disk $\D$ in the complement of $(\Li_{new})_i$ and $(\Li'_{new})_i^\pm $. Now by \cite{dy},  $(\Li_{new})_i$ and $(\Li'_{new})_i^\pm $ must be Legendrian isotopic. As $(\Li'_{new})_i^\pm $ is a stabilization of $(\Li'_{new})_i$, clearly $(\Li'_{new})_i$ is the destabilization of $(\Li_{new})_i$. Nothing changed outside the overtwisted $\Sp^3$. Thus all other link components remain unaltered and so their Legendrian isotopy class.

 \end{proof}

 Thus \fullref{lemma:neg_stab} together with \fullref{lemma:Positive_stab_Leg} proves that if a link lies on an open book as shown in \fullref{fig:braid}, any number of positive (resp. negative) stabilization and de-stabilization of a particular link component can be done in the complement of the other link components. We will use these lemmas in the proof of our main theorem in this section.
 \begin{definition}
 Suppose $[\Li]_n$ denotes the class of all the $n$-component links with each component having fixed $\tb$ and $\rot$. For any two links in this class there exists a contactomorphism that takes one to the other. We call this \emph{the coarse equivalence class} of a link.
 \end{definition}
 \begin{theorem}
 \label{thm:sg}
 Suppose $[\Li]_n$ be the coarse equivalence class of null-homologous, loose Legendrian link in $(\M,\xi)$. Then $\sg([\Li]_n)=0$.
 \end{theorem}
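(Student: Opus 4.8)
The strategy is to combine \fullref{thm:main} with a construction. By \fullref{thm:main}, the coarse equivalence class $[\Li]_n$ consists exactly of the loose Legendrian links of a fixed topological type whose components have the prescribed invariants $(\tb_i,\rot_i)$, so to prove $\sg([\Li]_n)=0$ it suffices to exhibit \emph{one} such link lying on a page of a genus-zero open book supporting $\xi$, with the page framing equal to the contact framing. To start, by Onaran's theorem I would place the underlying topological link $L$ on a page of a planar open book $(\B,\Sigma,\phi)$ of $\M$, which supports some contact structure $\xi_0$; a negative stabilization performed along a portion of the outer binding component and away from $L$ (as in \fullref{lemma:neg_stab}, so that the page stays planar) makes $\xi_0$ overtwisted with an overtwisted disk disjoint from $L$, and Legendrian realizing $L$ on the page yields a loose Legendrian link $\Li_0$ whose page framing equals its contact framing, with some component invariants $(\tb^0_i,\rot^0_i)$.

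The heart of the argument is to modify this open book --- keeping it planar and keeping the link on a page with matching framings --- so that it supports $\xi$ and so that the link attains the prescribed invariants. Since $\xi_0$ and $\xi$ are both overtwisted, by Eliashberg's classification it is enough to make them homotopic as plane fields. I would first match the Euler class by Lutz twists along transverse curves disjoint from $L$, realized as negative stabilizations along suitably chosen attaching curves, and then, the Euler classes being equal, match the remaining Gompf $d_3$-invariant by further negative stabilizations (and, if the opposite sign is needed, Lutz twists along transverse curves in the complement of the link). Interleaved with this I would use \fullref{lemma:Positive_stab_Leg} and \fullref{lemma:neg_stab}, together with the remark following them, to stabilize and destabilize the components of the link in the complement of the other components, moving $(\tb^0_i,\rot^0_i)$ to $(\tb_i,\rot_i)$; since each destabilization of a component is itself a negative stabilization of the open book and so perturbs the ambient contact structure, I would absorb this by carrying out the Euler-class and $d_3$ matching with the target shifted by the (a priori determined) total number of destabilizations used. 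The parity constraints come out right: stabilizing or destabilizing a component preserves $\tb_i\pm\rot_i\bmod 2$, while for a null-homologous Legendrian knot the value of $(\tb_i+\rot_i)\bmod 2$ depends only on the topological type and on $e(\xi)$; since each such move changes $\tb_i$ by $\mp 1$ and $\rot_i$ by $\pm 1$, once the ambient Euler class has been matched every target pair of the correct parity is reachable.

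The outcome is a planar open book supporting $\xi$ that carries a loose Legendrian link of the given topological type with component invariants exactly $(\tb_i,\rot_i)$ and with page framing equal to contact framing; by \fullref{thm:main} this link is coarsely equivalent to $\Li$, hence $\sg([\Li]_n)=0$. The step I expect to be the main obstacle is the middle one: realizing the prescribed change of the plane-field homotopy class of the supported contact structure --- above all the change of Euler class --- through open-book operations that preserve planarity and keep the link on a page with matching framings, while retaining enough control over the link's classical invariants.
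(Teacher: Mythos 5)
Your proposal follows essentially the same route as the paper's proof: place the link on a planar open book via Onaran's theorem, negatively stabilize to make the supported structure overtwisted, match the homotopy class of plane fields ($d_2$/Euler class and $d_3$) by Lutz twists and Murasugi sums while preserving planarity, adjust the classical invariants of each component using the stabilization and destabilization lemmas in the complement of the other components, and conclude by the coarse classification theorem. The point you flag as the main obstacle is exactly the step the paper also treats tersely (deferring to the planar open book literature), so your account matches the paper's argument in both structure and level of detail.
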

\begin{proof}
 As every link is planar, we can put $\Li$ on a planar open book $(\B,\Sigma,\phi)$ for $\M$. Now $(\B,\Sigma,\phi)$ does not necessarily support the underlying contact structure. But we can always negatively stabilize the open book and assume the contact structure it supports is overtwisted and call it $\xi'$. As overtwisted contact structures can be identified using their $d_2$ and $d_3$ invariant, we start making alterations to the open book so that the invariants match with those of $\xi$. By Lutz twist and Murasugi summing in an appropriate way we can make the $d_2$ and $d_3$ invarants agree. Note that, $d_3$ invariant are additive under connected sum operation. Also none of these operations change the genus of the open book. For details of these operations check \cite{etplanar}. Now we have a planar open book which supports a contact structure whose $d_2$ and $d_3$ invariants agree with $\xi$. By Eliashberg's classification of overtwisted contact structures, these contact structures are isotopic. Next we can Legendrian realize the link on the page and call it $\Li'$. Suppose we want to realize the following classical invariants, $\tb=(t_1, t_2,\dots t_n)$ and $\rot=(r_1, r_2,\dots r_n)$.  If the classical invariants of $\Li'$ agree with that of $\Li$, we are done. Suppose not. Then we can have the following cases:
 \vspace{-3 mm}
\subsection*{Case 1}Suppose $\tb$ agrees but $\rot$ does not. Let $\Li_j$ be a link component with $\tb(\Li_j)=t_j$ and $\rot(\Li_j)=r'_j\neq r_j$. Now we will negatively or positively stabilize the link component $\Li_j$ to increase or decrease $r'_j$. We know by \fullref{lemma:Positive_stab_Leg}, this operation can be done fixing other link components. Notice, this will change $t_j$ to $t_j-1$. So we need to destabilize the link component in an appropriate way so that we do not reverse the change in $r_j$. This can be done in the following way, if we positively stabilize the link component, we will negatively destabilize it. This can be done fixing all other link components as stated in \fullref{lemma:neg_stab}. Now this will keep the $\tb$ fixed and increase $\rot$ by 2. Similarly doing a negative stabilization and a positive destabilization will keep $\tb$ fixed and decreases $\rot$ by 2. As $\tb+\rot$ is always odd for a Legendrian knot, we can achieve any possible rotation number for a link component. Now we can do this any number of time to achieve $r_j$ while fixing the Legendrian type of all other link components. Here note that, we might end up in a contact structure different from the one we started as negative stabilization alters a contact structure. But then we can always alter it by Murasugi summing with appropriate open books of $\Sp^3$. In this way, we will find a link sitting on the page of an open book supporting the contact structure $\xi$ with $\tb=(t_1, t_2,\dots t_n)$ and $\rot=(r_1, r_2,\dots r_n)$. By \fullref{thm:main}, $\Li$ must be in the same coarse equivalence class . This proves the theorem.

\subsection*{Case 2}Suppose $\tb(\Li_j)=t'_j\neq t_j$. In this case we need to stabilize or destabilize the link component $\Li_j$ to decrease and increase the $\tb$ till it agrees with $t_j$ and this can be done keeping the other components fixed by \fullref{lemma:Positive_stab_Leg} and \fullref{lemma:neg_stab}. Now we can do this local operation for all the link components one by one till we get the $\tb$ we desire. So we are in Case 1. 
 \end{proof}
 The next theorem tells us that the converse of the above theorem is not true.
 \begin{theorem}
 There are examples of non-loose links with support genus zero.
 \end{theorem}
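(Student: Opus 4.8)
The plan is to exhibit an explicit Legendrian link that already sits on the page of a \emph{planar} open book supporting an overtwisted contact structure, and then to check separately that its complement is tight. If such a link is produced, $\sg(\Li)=0$ is automatic: the page has genus zero and, by construction, the page framing agrees with the contact framing. So the only real content is the verification of non-looseness, exactly as in the discussion following \fullref{thm:sg}, where looseness was the feature that made everything go through.

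Concretely, I would start from the negative Hopf band open book $(\B,\Sigma,\phi)$ of $\Sp^3$, whose page $\Sigma$ is an annulus, whose binding is a negative Hopf link, and whose monodromy is a single left-handed Dehn twist $D_c^{-1}$ along the core $c$; since the monodromy contains a left-handed twist, this open book supports an overtwisted contact structure $\xi$ on $\Sp^3$. Now take $\Li=\Li_1\cup\Li_2$ to be two parallel copies of $c$, Legendrian realized on the page with page framing equal to the contact framing. The two components cobound a sub-annulus of the page, so $\Li$ is topologically a Hopf link, and one reads its classical invariants directly off the open book (the page framing versus the Seifert framing gives each $\tb(\Li_i)$, and the relative Euler class / singularity count gives each $\rot(\Li_i)$). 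If these invariants do not match a non-loose representative, one adjusts them by the purely local page moves of \fullref{lemma:Positive_stab_Leg} and \fullref{lemma:neg_stab}, repairing the supported contact structure afterwards by Murasugi summing with open books of $\Sp^3$ exactly as in the proof of \fullref{thm:sg}; none of this changes the genus of the page. By construction $\Li$ then lies on a planar page of an open book supporting $\xi$ with matching framings, so $\sg(\Li)=0$.

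For non-looseness I would argue that $\Li_1$ by itself is a non-loose Legendrian unknot: its complement $\Sp^3\setminus\N(\Li_1)$ is a solid torus carrying, for these invariants, a tight contact structure (this is an Eliashberg--Fraser non-loose unknot \cite{ef}; tightness can also be checked directly by a convex-surface computation on the complementary solid torus). Since $\Li_2$ is a page-parallel push-off of $\Li_1$, we have $\Sp^3\setminus\N(\Li)\subset\Sp^3\setminus\N(\Li_1)$, and a codimension-zero submanifold of a tight contact manifold is tight; hence $\Sp^3\setminus\N(\Li)$ is tight and $\Li$ is non-loose. A second route, if one prefers to avoid the push-off bookkeeping, is to compute $(\tb(\Li_i),\rot(\Li_i))$ precisely and match them against the classification of non-loose Legendrian Hopf links of Geiges--Onaran \cite{geiona} (as extended by Matkovi\v{c} \cite{matko}), reading non-looseness off their list.

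The main obstacle is precisely this last step: controlling the contact structure on the complement. Either one carries out the convex-surface / Honda-type analysis of the complementary solid torus (or $T^2\times I$) needed to certify tightness, or one must compute the classical invariants of the page-realized link accurately enough to invoke the existing Hopf-link classification, and, if the naive construction lands on the wrong invariants, iterate the page stabilization/destabilization moves of \fullref{lemma:Positive_stab_Leg} and \fullref{lemma:neg_stab} while carefully tracking the effect of the accompanying Lutz twists and Murasugi sums on $\xi$. Everything else -- planarity of the page, agreement of the framings, and hence $\sg(\Li)=0$ -- is immediate from the construction.
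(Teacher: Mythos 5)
Your overall strategy is the same as the paper's: exhibit an explicit Hopf link on the planar page of an open book supporting an overtwisted structure on $\Sp^3$, and verify non-looseness separately. But the verification is exactly where your argument has a genuine gap, and you have not closed it. Your ``second route'' --- computing $(\tb(\Li_i),\rot(\Li_i))$ and matching them against the Geiges--Onaran list --- cannot work even in principle: classical invariants do not detect non-looseness. Indeed, \fullref{thm:Legmain} of this paper implies that for essentially any admissible pair of invariants there is also a \emph{loose} Hopf link realizing them, so agreeing with an entry of the classification tells you nothing about the complement. Your first route (the component $\Li_1$ is an Eliashberg--Fraser non-loose unknot, hence the link complement, being a codimension-zero piece of the tight knot complement, is tight) is logically sound and is a nice reduction, but the key fact --- that the Legendrian realization of the core of the negative Hopf band is the \emph{non-loose} unknot rather than a loose one with the same $\tb$ and $\rot$ --- is precisely the content you defer to ``a convex-surface computation.'' The clean way to certify this, and the way the paper does it, is via the contact surgery presentation: perform $-1$ contact surgery on one component so that it cancels a $+1$ surgery, landing in the tight $\Sp^1\times\Sp^2$ (or $(\Sp^3,\xistd)$ in the second example); since the link complement embeds in the resulting tight manifold, the link is non-loose. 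The paper starts from the surgery diagrams of \cite{geiona}, where non-looseness is already certified this way, and then builds the planar open book around the diagram by stabilizing the annular open book; the only work is the open book construction.

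A second problem is your contingency plan of adjusting the invariants by the page moves of \fullref{lemma:Positive_stab_Leg} and \fullref{lemma:neg_stab}. The destabilization move of \fullref{lemma:neg_stab} explicitly produces an overtwisted disk $\D$ in the complement of the component being modified (its proof invokes \cite{dy}, which requires such a disk), so applying it to your candidate link would render it loose and defeat the purpose of the construction. These lemmas are tools for the \emph{loose} case (\fullref{thm:sg}); they cannot be used to steer a would-be non-loose example to prescribed invariants. To repair the proof you should fix the target link in advance --- a non-loose Hopf link given by an explicit $(\pm1)$-surgery diagram whose non-looseness follows from surgery cancellation --- and then do only the open book construction, which preserves the link and the contact structure.
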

 \begin{figure}[!htbp]
 \includegraphics[scale=0.15]{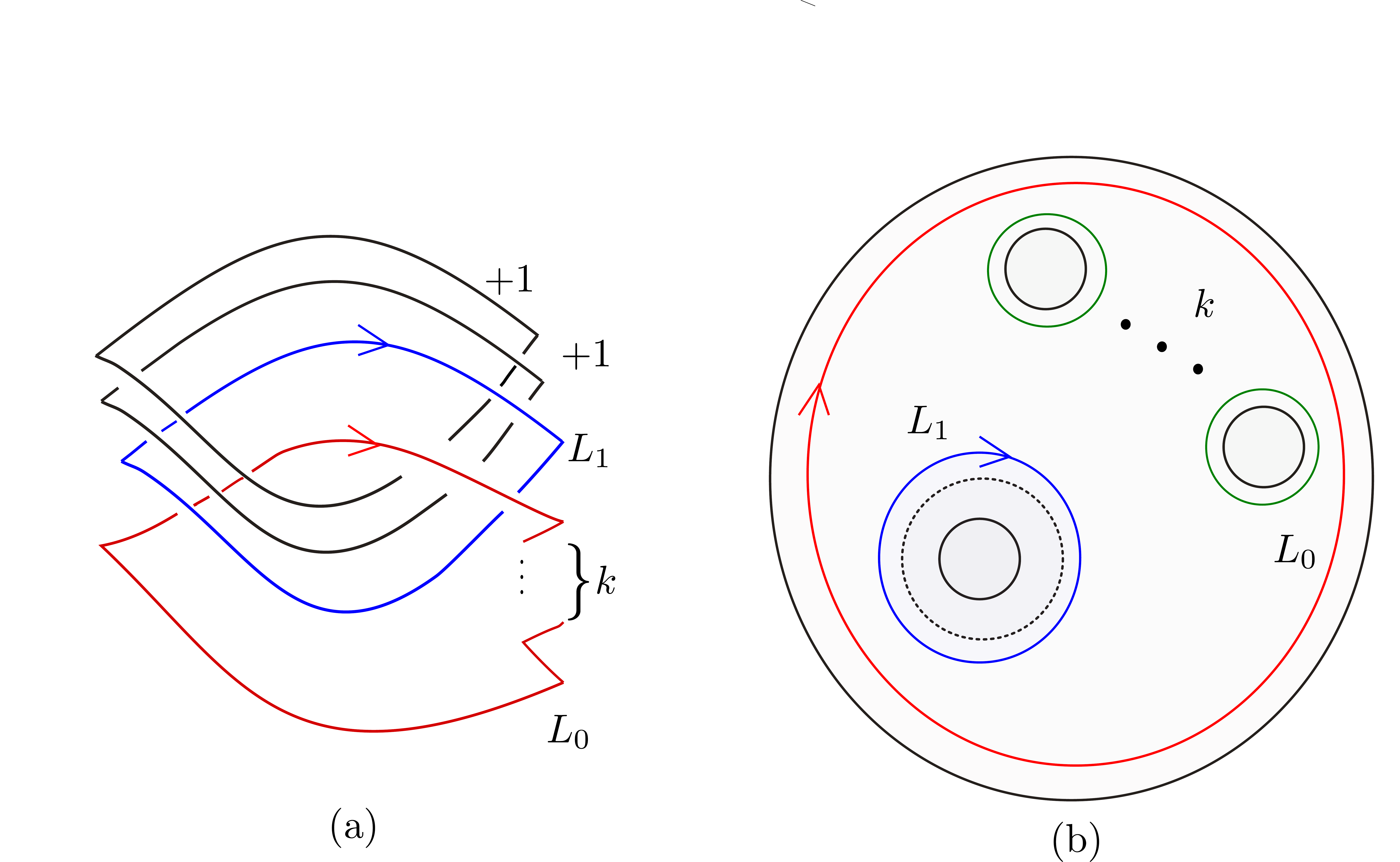}
 \caption{Example 1:(a) Non-loose Hopf link in $(\Sp^3,\xi_{-1})$. (b) Planar open book supporting the contact structure where the Hopf link sits. We do a right handed Dehn twist along the green curves and left-handed Dehn twist along the dashed one.}
  \label{fig:Hopf}
 \end{figure}
 \begin{proof}
  \fullref{fig:Hopf}(a) shows a non-loose positive Hopf link in $(\Sp^3,\xi_{-1})$. To see this, we do a -1 surgery along $\Li_1$ which will cancel one of the +1- surgeries and we will be left with one +1- surgery on $\tb=-1$ unknot in $(\Sp^3,\xistd)$ which produces the unique tight $\Sp^1\times\Sp^2$. For details, check \cite{geiona}. Next, we constructed a planar open book compatible with $(\Sp^3,\xi_{-1})$ where the non-loose Hopf link sits. We start with the annular open book that supports $(\Sp^3,\xistd)$ and used the well known stabilization method we used previously in \fullref{lemma:Positive_stab_Leg}. The monodromy of this open book can be computed from the Dehn twists coming from the stabilizations and the Dehn twists defined by the surgery curves. One of the left-handed Dehn twist coming from the +1 surgery will cancel the right handed Dehn twist of the annular open book we started with. We perform right handed Dehn twist along the solid green curves and the left handed Dehn twist along the dashed curve.  This clearly shows $\sg(\Li_0\sqcup\Li_1)=0$.
  \begin{figure}[!htbp]
  \centering
  \includegraphics[scale=0.15]{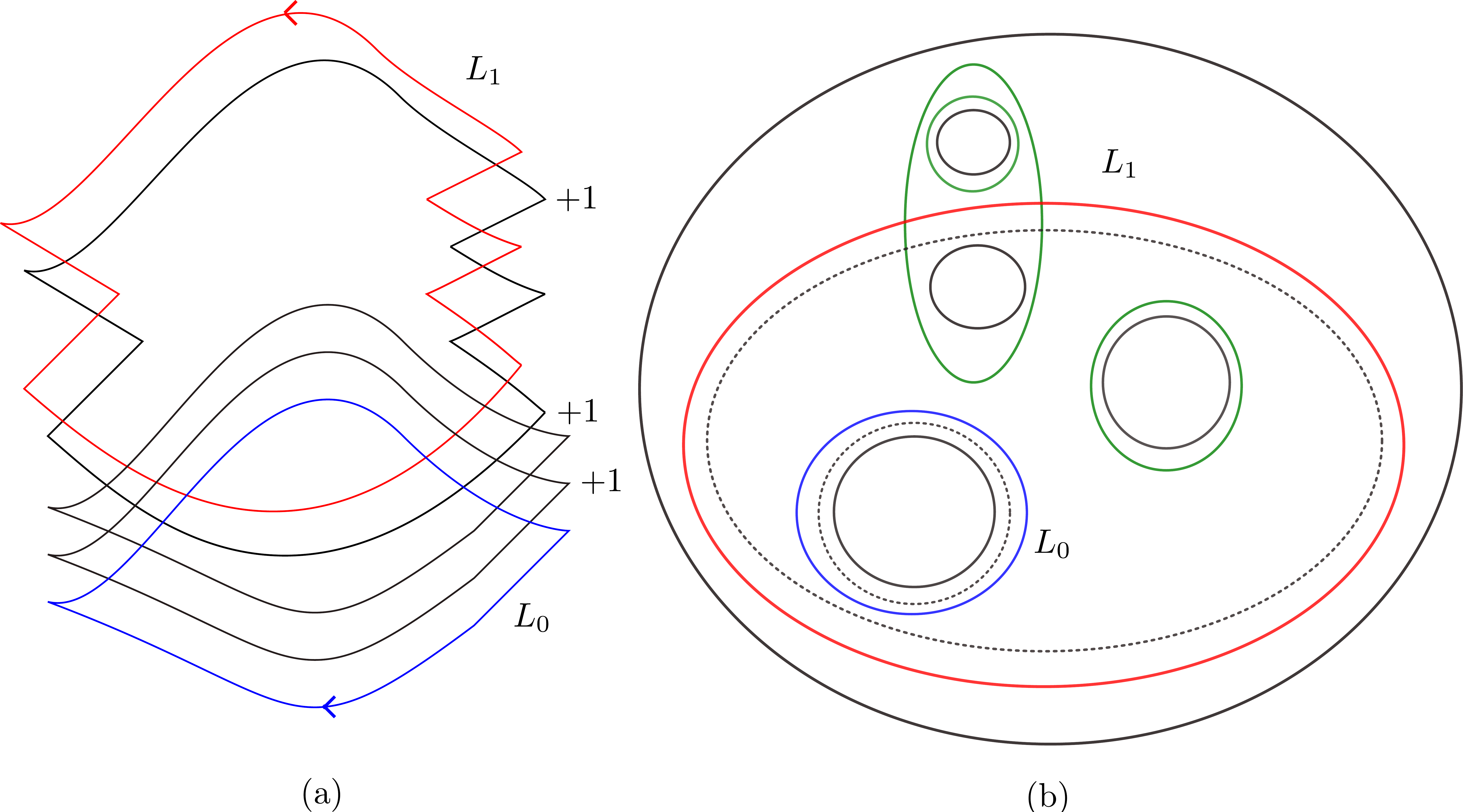}
  \caption{Example 2:(a) Non-loose Hopf link in $(\Sp^3,\xi_{-2})$. (b) A compatible planar open book with the link lying on the page.    }
  \label{fig:Hopf2}
  \end{figure}
  
   Example 2 shows a non-loose Hopf link in $(\Sp^3,\xi_{-2})$. Here a $-1$ surgery on $L_2$ and $-2$ surgery on $L_2$ gives us $(\Sp^3,\xistd$). Check \cite{geiona} for details. We produce a compatible open book for this contact structure as follows: like before we started with an annular open book supporting $(\Sp^3,\xistd)$ and used the well known stabilization method. Notice that the right handed Dehn twist coming from the annular open book again gets cancelled with the negative Dehn twist coming from one of the surgery curve. We perform right handed Dehn twist along the green curves and one left handed Dehn twist along the black dashed curves.
 \end{proof}
  \bibliographystyle{mwamsalphack}
\bibliography{references1}
\end{document}